\newtheoremstyle{theorem}
  {10pt}          
  {10pt}  
  {\sl}  
 {}
  {\bf}  
  {. }    
  { }    
  {}     
\theoremstyle{theorem}
\newtheorem{theorem}{Theorem}[section]
\newtheorem{corollary}{Corollary}[section]
\newtheorem{definition}{Definition}[section]
 \newtheorem{lemma}{Lemma}[section]
 \newtheorem{proposition}{Proposition}[section]
 \newtheorem{remark}{Remark}[section]
\numberwithin{equation}{section}
\newtheoremstyle{defi}
  {10pt}          
  {10pt}  
  {\rm}  
  {}  
  {\bf}  
  {. }    
  { }    
  {}     
\theoremstyle{defi}
\newcommand{\R}{\mathbb{R}}
\newcommand{\ep}{\varepsilon}
\newcommand{\de}{\partial}
\newcommand{\vr}{\rho}
\newcommand{\vu}{\mathbf{u}}
\newcommand{\vU}{\mathbf{U}}
\newcommand{\vm}{\mathbf{m}}
\newcommand{\vv}{\mathbf{v}}
\newcommand{\vn}{\mathbf{n}}
\newcommand{\vf}{\mathbf{f}}
\newcommand{\vrt}{\tilde{\rho}}
\newcommand{\vut}{\tilde{\vu}}
\renewcommand{\d}{\mathrm{d}}
\newcommand{\dy}{\,{\rm d} {y}}
\newcommand{\dt}{\,{\rm d} t }
\newcommand{\ds}{\,{\rm d} s }
\newcommand{\dydt}{\,{\rm d}y\,{\rm d}t}
\newcommand{\vphi}{{\boldsymbol \varphi}}
\newcommand{\vph}{{\boldsymbol \phi}}
\newcommand{\N}{\mathbb{N}}
\renewcommand{\S}{\mathbb{S}}
\newcommand{\supp}{{\rm supp}}
\newcommand{\Div}{{\rm div}_y}
\newcommand{\Grad}{\nabla_y}
\newcommand{\M}{\mathcal{M}}
\newcommand{\MM}{\mathcal{M}^+_{3 \times 3}}
\newcommand{\tr}{{\rm tr}}
\newcommand{\al}{\alpha}
\newcommand{\rhoin}{\rho_\infty}
\begin{document}
\baselineskip = 13.5pt

\title{On dissipative turbulent solutions to the compressible anisotropic Navier-Stokes equations in unbounded domains}

\author{Ond\v rej Kreml$^1$\footnote{Email: kreml@math.cas.cz} ,\ \ \
\v{S}\'{a}rka Ne\v{c}asov\'{a}$^1$\footnote{Email: matus@math.cas.cz} , \ \ \  Tong Tang$^{2}$\footnote{Email: tt0507010156@126.com}\\
{\small  1. Institute of Mathematics of the Czech Academy of Sciences,} \\
{\small \v Zitn\' a 25, 115 67, Praha 1, Czechia}\\
{\small 2. School of Mathematical Science,}\\
{\small Yangzhou University, Yangzhou 225002, P.R. China}\\
\date{}}
\maketitle
\begin{abstract}
Inspired by Abbatiello, Feireisl and Novotn\' y \cite{e5}, we prove the global existence of dissipative turbulent solution for the compressible Navier-Stokes equations with anisotropic viscous stress tensor on unbounded domain. Our work complements the result of Bresch and Jabin \cite{br},
where the authors used the new compactness method to prove the existence of a weak solution to the same system in $\mathbb{T}^3$. By virtue of the concept of dissipative turbulent solutions, we are able to relax assumptions on the anisotropic tensor coefficients and the pressure law coefficient. We point out that we establish the existence result on a large class of unbounded domains, which is more conform to geophysical context. We also prove the weak-strong uniqueness property of acquired dissipative turbulent solutions.
\vspace{0.5cm}

{{\bf Key words:} dissipative turbulent solutions, anisotropic stress tensor, unbounded domain, weak-strong uniqueness}.

\medskip

{ {\bf 2010 Mathematics Subject Classifications}: 35Q30, 76N10.}
\end{abstract}
\maketitle
\section{Introduction}\label{s:1}\setcounter{equation}{0}
As is well known, the compressible Navier–Stokes equations form a vast subject with a long history of fruitful results in mathematics and various other scientific fields. A canonical assumption in the Navier–Stokes framework is that the viscous stress tensor is isotropic. However, with the rapid development of studies in geophysical flows, numerous works in meteorology and oceanography have shown that the vertical scale is much smaller than the horizontal scale. Consequently, a common physical assumption in geophysical flows is that the viscosity coefficient in the vertical direction is much smaller than that in the horizontal direction, rendering the viscous stress tensor anisotropic.

In what follows we denote the space variable by $y \in \R^3$ and use the notation $y = (x,z)$, where $x \in \R^2$ represents the horizontal variable and $z \in \R$ represents the vertical variable. We assume that the fluid occupies an unbounded domain $\Omega \subset \R^3$ and we assume that
\begin{equation}\label{eq:Omega}
    \Omega = \bigcup_{R = 1}^{\infty} \Omega_R, \quad \Omega_R \subset \Omega_{R+1},
\end{equation}
where $\Omega_R$ are bounded uniformly Lipschitz domains.

We consider the compressible anisotropic Navier-Stokes system in the space-time cylinder $(0,T)\times\Omega$:
\begin{equation}\label{1a}
\left\{
\begin{array}{llll}  \partial_{t}\rho+\Div(\rho \mathbf{u})=0, \\
\partial_t(\rho \mathbf{u})+\Div(\rho\mathbf{u}\otimes\mathbf{u})+\Grad p(\rho)=\mu\Delta_x\mathbf{u}+\delta\partial_{zz}\vu+(\mu+\lambda)\Grad\Div\mathbf{u}+\rho \vf,
\end{array}\right.
\end{equation}
where $\mu, \delta > 0$ and $\lambda \in \R$ are the viscosity coefficients, the unknown fields $\rho=\rho(t,y)$ and $\mathbf{u}=\mathbf{u}(t,y)$ represent the density and the velocity of the fluid, and $\vf$ stands for a given vector field representing the external volume force. The symbol $p(\rho)$ denotes the pressure. Throughout the paper we assume that the pressure takes the form
\begin{equation}\label{eq:pressurelaw}
    p(\rho) = \rho^{\gamma},\hspace{5pt}(\gamma > 1).
\end{equation}
The system \eqref{1a} is supplemented with the no-slip boundary condition
\begin{equation}\label{eq:BC}
\mathbf u|_{\de\Omega}=0.
\end{equation}
Since we suppose the domain is unbounded, we state the far field conditions at infinity, that is
\begin{equation}\label{1b}
\lim_{|y|\rightarrow\infty}(\rho(t,y),\mathbf u(t,y))=(\rho_\infty,0),\hspace{3pt}\forall t>0,
\end{equation}
where $\rho_\infty > 0$ is a given constant. Initial conditions for system \eqref{1a} are given as
\begin{align}
    &\rho(0,y) = \rho_0(y), \nonumber \\
    &\rho\vu(0,y) = (\rho\vu)_0(y), \label{eq:IC}
\end{align}
and we assume that the initial conditions satisfy the compatibility condition
\begin{align*}
    \rho_0(y)  \rightarrow\rho_\infty,\hspace{6pt}
    (\rho\vu)_0(y)\rightarrow0,\hspace{5pt}\text{as}\hspace{3pt}|y|\rightarrow\infty.
\end{align*}


The study of the well-posedness of the compressible Navier–Stokes equations is fundamental for understanding the mathematical structure of fluid dynamics. In 1980s, Matsumura and Nishida \cite{m} proved the global existence of classical solutions when the initial data are close to equilibrium. Then, Cho et al. \cite{c}, Salvi and Stra\v skraba \cite{st} obtained the local existence of strong solutions when the initial density contains vacuum.  Recently, Huang, Li and Xin \cite{xin} established the global existence of classical solutions to isotropic Navier-Stokes equations with small total energy but large oscillation. The major breakthrough in the theory of weak solutions was achieved by Lions \cite{l2} and Feireisl \cite{e, e1}, they proved the global existence of weak solutions with finite energy without any restriction on the size of initial data. The main obstacle in the existence of weak solutions is the compactness of the convective term $\rho\mathbf u \otimes \mathbf u$ and the pressure term $p(\rho)$. In order to deal with this issue and in particular to prove strong convergence of the sequence of densities, the effective viscous flux $F=(2\mu+\lambda)\Div\mathbf u-p(\rho)$ was used in an essential way in the theory of Lions and Feireisl. The crucial ingredient in the proof of global existence of finite energy weak solutions is the relation
\begin{equation}\label{1c}
\Delta_y F=\Div(\partial_t(\rho\mathbf u)+\Div(\rho\mathbf u\otimes\mathbf u)).
\end{equation}

Hoff and Smoller \cite{h}, and Serre \cite{s} used the effective viscous flux to study the propagation of oscillations and it is used as a key tool in the existence theory of various compressible fluid models, see \cite{duan,li1} as just a few examples.

We emphasize that the relation \eqref{1c} strongly depends on the isotropic form of the stress tensor. In geophysical flows, the stress tensor ceases to be isotropic because the horizontal and vertical directions exhibit distinct physical properties. If one tries to make the same steps with anisotropic stress tensor, the procedure leads to a non-local relation between $\Div\mathbf u$ and $p(\rho)$ in the effective viscous flux, as in this case, relation \eqref{1c} becomes
\begin{equation}
(\mu\Delta_x+\delta\partial_{zz}+(\mu+\lambda)\Delta_y)\Div\mathbf u-\Delta_y p(\rho)=\Div(\partial_t(\rho\mathbf u)+\Div(\rho\mathbf u\otimes\mathbf u)).
\end{equation}

As is pointed out by Bresch and Jabin \cite{br}, the anisotropic case seems to fall completely out the theory developed by Lions and Feireisl. Therefore, there are very few results about the existence of weak solutions for compressible anisotropic Navier-Stokes equations. The authors in \cite{br} introduced a new compactness method and used it to obtain global existence of weak solutions, which is the first global existence result for weak solutions of compressible anisotropic Navier-Stokes system. In order to achieve this groundbreaking result, the authors needed sufficient smallness of the difference $\mu-\delta$ and a rather high pressure law coefficient $\gamma > 2 + \frac{\sqrt{10}}{2}$ and they worked in the bounded domain $\mathbb{T}^3$. Later, based on the defect measures, Bresch and Burtea \cite{b1,b2} proved the global existence of weak solutions for the quasi-stationary compressible Stokes system. The same authors proved the existence of Hoff type weak solution for anisotropic stress tensor to Navier-Stokes equations, with large range for the coefficient $\gamma$ and initial conditions close to equilibrium, see \cite{b3}. Bocchi, Fanelli and Prange \cite{bo} used the maximal regularity to obtain local existence of strong solutions for the anisotropic Navier-Stokes equations.

Inspired by DiPerna's work \cite{d}, Feireisl et al. \cite{e5,j1,j2} proposed the concept of dissipative turbulent solution, i.e. solutions satisfying the equations and the energy inequality in the distributional sense but with extra defect measures. In the field of incompressible inviscid flows, Lions \cite{l1} used the terminology of dissipative solutions, this concept was then generalized by Brenier \cite{bre}. This notion of solution can be seen as a variant of a dissipative measure-valued solution, and can be constructed by convergence of certain numerical schemes. The key idea is that oscillation and concentration defects produced from convective term and pressure can be conveniently unified giving rise to a single positive definite Reynolds defect measure which is suitably related to the energy defect measure. Recently, the notion of dissipative turbulent solution is studied for various fluid models \cite{ba,ba1,e6} and its long time behavior is studied in \cite{e7}.

Regrading the fluid domain, it is reasonable to develop geophysical flow models and study existence of solutions in unbounded domains. The main strategy of constructing a weak solution in a given unbounded domain is to approximate the domain with a sequence of expanding bounded domains and show convergence of the sequence of solutions in appropriate function spaces, this is called the invading domain method. For incompressible Navier-Stokes equations, Heywood \cite{hey}, Robinson et al. \cite{r} (Chapter 4.4) used this method and proved the global existence of weak solutions in $\mathbb{R}^3$. Regarding compressible Navier-Stokes equations, Novotn\'{y} and his coauthors \cite{j,n} and Poul \cite{p} made significant contributions.

The objectives of this paper are multifold. First, it is to explore existence of dissipative turbulent solution on unbounded domains in general. The main reference \cite{e5} proves the existence on bounded domains and the extension to unbounded domains is not completely straightforward. As far as we know, the proof of existence of dissipative turbulent solutions on unbounded domains is not available in the literature at this point. It is notable that our proof naturally works also for isotropic Navier-Stokes system  on unbounded domain. Our second aim is to provide proof of existence of some type of global-in-time solutions to the anisotropic compressible Navier-Stokes system for reasonably large set of viscosity coefficients and without restrictions on the pressure law coefficent $\gamma$, therefore complement the work of Bresch and Jabin \cite{br} on weak solutions to this problem. The notion of dissipative turbulent solution is suitable for this task as it does not require proof of strong convergence of densities and is able to cover all $\gamma > 1$. Last but not least, in order to show that this class of solutions is relevant, we show the weak-strong uniqueness principle.

The paper is organized as follows. In Section \ref{s:2}, we recall some useful lemmas and introduce the definition of the dissipative turbulent solution. Our main results are stated in Section \ref{s:3}. Section \ref{s:4} is devoted to proof of existence of dissipative turbulent solution. In Section \ref{s:5} we prove that any dissipative turbulent solution satisfies the relative entropy inequality and we prove the weak-strong uniqueness property for dissipative turbulent solution in Section \ref{s:6}.

Throughout the paper we work with the assumption that the operator  $-\mu\Delta_x - \delta\de_{zz} - (\mu+\lambda)\Grad\Div$ is symmetric and strongly elliptic. This gives rise to a restriction on the viscosity coefficients identified in Lemma \ref{l:elip}. 

The first part of the proof of Theorem \ref{t3.1} closely follows the proof in \cite{e5}. However, we emphasize that the result of \cite{e5} cannot be used directly to claim the existence of dissipative turbulent solution on bounded domains $\Omega_R$, since the studied model \eqref{1a} does not fall into the general class of models studied in \cite{e5}. In particular the system \eqref{1a} arises from a non-symmetric stress tensor $\mathbb{S}$, whereas the authors in \cite{e5} assume symmetric stress tensors. This does not change the existence proof on bounded domains in a significant way, however we provide the details of the proof in sections \ref{ss:41}-\ref{s:limit_n} for completeness.

\section{Preliminaries}\label{s:2}

Before stating the definition of dissipative turbulent solution, we first introduce notation used in this work and some basic lemmas needed later.

We use the standard notation $L^p(M)$ and $W^{k,p}(M)$ for Lebesgue and Sobolev spaces on domain $M$ and we do not distinguish between spaces of scalar-, vector- and matrix-valued functions. $W^{k,p}_{0}(M)$ denotes the subspace of $W^{k,p}(M)$ consisting of functions with vanishing trace on $\de M$. The symbol $\M(\overline{M})$ stands for the space of (signed) Radon measures on $\overline{M}$, $\M^+(\overline{M})$ denotes the set of non-negative scalar Radon measures on $\overline{M}$ and $\M^+_{3\times 3}(\overline{M})$ stands for the set of symmetric positively semi-definite $3\times 3$ matrix-valued measures on $\overline{M}$. The trace of a matrix $\S$ is denoted by $\tr[\S]$.

The pressure potential is given by
\begin{equation}\label{eq:pPrel}
P'(\rho)\rho - P(\rho) = p(\rho),
\end{equation}
which yields
\begin{equation}\label{eq:PP}
    P(\rho) = \frac{\rho^{\gamma}}{\gamma-1}.
\end{equation}
Moreover, we define
\begin{equation}\label{eq:Einf}
\mathcal{E}^{\rho_\infty}(\rho) = P(\rho) - P(\rho_\infty) - P'(\rhoin)(\rho - \rhoin) = \frac{1}{\gamma-1}\left(\rho^\gamma+(\gamma-1)\rho^\gamma_\infty-\gamma\rho\rho^{\gamma-1}_\infty\right),
\end{equation}
which is a convex function.

The following lemma introduces necessary conditions for viscosity parameters $\mu, \delta$ and $\lambda$ used in this paper.

\begin{lemma}\label{l:elip}
    Let
    \begin{equation}\label{eq:visc_ass}
    \mu \geq \delta > 0 \quad \text{ and } \lambda > -\frac{\mu(\mu+3\delta)}{\mu+2\delta}.
    \end{equation}
    Then the operator $A = -\mu\Delta_x - \delta\de_{zz} - (\mu+\lambda)\Grad\Div$ is symmetric and strongly elliptic.
\end{lemma}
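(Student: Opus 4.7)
\ Symmetry follows by integration by parts on smooth compactly supported fields:
\[
\int_\Omega A\vu\cdot\vv\dy = \mu\int_\Omega\nabla_x\vu:\nabla_x\vv\dy + \delta\int_\Omega\de_z\vu\cdot\de_z\vv\dy + (\mu+\lambda)\int_\Omega\Div\vu\,\Div\vv\dy,
\]
which is manifestly symmetric in $(\vu,\vv)$. The substance of the statement is the ellipticity claim.

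The strategy is to reduce strong ellipticity to pointwise positive-definiteness of the quadratic form on $M\in\R^{3\times 3}$ associated with the bilinear form above, namely
\[
Q(M) = \mu\sum_{i=1}^{3}\sum_{j=1}^{2}M_{ij}^2 + \delta\sum_{i=1}^{3}M_{i3}^2 + (\mu+\lambda)(\tr M)^2
\]
(with $M_{ij}=\de_j u_i$ in mind). The key structural observation is that cross-products between distinct entries of $M$ appear only via $(\tr M)^2$, which couples only the diagonal entries; hence $Q$ splits cleanly as $Q_{\mathrm{diag}}(d)+Q_{\mathrm{off}}$ with $d=(M_{11},M_{22},M_{33})$. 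The off-diagonal piece is a positive-coefficient sum of squares (coefficients $\mu$ or $\delta$) and is trivially coercive with constant $\delta$ under $\mu\geq\delta>0$.

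The diagonal part equals $d^{\top}K d$ with
\[
K=\begin{pmatrix} 2\mu+\lambda & \mu+\lambda & \mu+\lambda \\ \mu+\lambda & 2\mu+\lambda & \mu+\lambda \\ \mu+\lambda & \mu+\lambda & \mu+\lambda+\delta \end{pmatrix},
\]
and I would conclude by Sylvester's criterion. A direct computation gives the leading principal minors $K_1=2\mu+\lambda$, $K_2=\mu(3\mu+2\lambda)$, and
\[
\det K \;=\; \mu(\mu+2\delta)\left(\lambda+\frac{\mu(\mu+3\delta)}{\mu+2\delta}\right).
\]
The third minor is positive exactly under the stated bound on $\lambda$, and a brief check shows that this same bound already forces $K_1,K_2>0$ whenever $\mu\geq\delta>0$ (both reduce to positive rational expressions in $\mu,\delta$). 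Combining $Q_{\mathrm{diag}}(d)\geq c'|d|^2$ with the off-diagonal estimate yields $Q(M)\geq c|M|^2$ for some $c>0$, which is strong ellipticity. The only genuinely nontrivial step is the algebraic evaluation of $\det K$ that exposes the factor $\mu+2\delta$ and the exact constant in the hypothesis; the decoupling of $Q$ into diagonal and off-diagonal blocks is the conceptual shortcut that avoids dealing with a full $9\times 9$ matrix.
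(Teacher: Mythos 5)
Your proof is correct, and it shares the paper's skeleton — the same pointwise quadratic form (the paper's $f$ in \eqref{eq:fb}), the observation that the off-diagonal entries carry coefficients $\mu$ or $\delta$ and are harmless, and the reduction to a quadratic form in the three diagonal entries — but it finishes the crucial algebraic step differently. The paper certifies positivity of the diagonal part by a constrained minimization: it computes $\inf_{b_1+b_2+b_3\neq 0}\bigl(b_1^2+b_2^2+\tfrac{D}{M}b_3^2\bigr)/(b_1+b_2+b_3)^2 = \tfrac{D}{M+2D}$, introduces $V(\beta)$, and uses its monotonicity to reduce the condition to $V(0)>0$, which is exactly \eqref{eq:visc_ass}; this route has the benefit of producing the explicit coercivity constant $\beta$ in \eqref{eq:C0def}, which the paper reuses in Corollary \ref{co:beta} and in the energy estimates. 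You instead write the diagonal part as $d^{\top}Kd$ and apply Sylvester's criterion; your minors are correct ($K_1=2\mu+\lambda$, $K_2=\mu(3\mu+2\lambda)$, $\det K=\mu\bigl(\mu^2+3\mu\delta+\lambda(\mu+2\delta)\bigr)=\mu(\mu+2\delta)\bigl(\lambda+\tfrac{\mu(\mu+3\delta)}{\mu+2\delta}\bigr)$), and indeed the stated bound on $\lambda$ forces $K_1,K_2>0$ since $\tfrac{\mu(\mu+3\delta)}{\mu+2\delta}\leq\tfrac{3\mu}{2}\leq 2\mu$. This is more elementary and makes transparent that the threshold for $\lambda$ is exactly where $\det K$ changes sign, but it yields only an unspecified coercivity constant $c=\min\{\lambda_{\min}(K),\delta\}$; to recover the explicit value \eqref{eq:C0def} you would have to compute the eigenvalues of $K$, which are $\mu$ and the two roots $\eta_{8,9}$ appearing in the paper's Lemma \ref{l:convex}, so the two arguments ultimately meet. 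For the lemma as stated, your argument is complete.
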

\begin{proof}
    The fact that $A$ is a symmetric operator is straightforward.   In order to prove strong ellipticity of the operator $A$ we identify $3\times 3$ matrices $\mathbb{A}^{jk} = \{a^{jk}_{lm}\}_{l,m=1}^3$ such that
    \begin{equation}
        (A \vu)_j = \sum_{k,l,m = 1}^3 a^{jk}_{lm}\frac{\de^2 u_k}{\de y_l \de y_m}
    \end{equation}
    and we need to prove that there exists a constant $\beta$ such that for any $\mathbb{B} = \{b_{lm}\}_{l,m=1}^3 \in \R^{3 \times 3}$ it holds
    \begin{equation}\label{eq:strong_el_def}
    \sum_{j,k,l,m = 1}^{3} a^{jk}_{lm} b_{jl} b_{km} \geq \beta|\mathbb{B}|^2.
    \end{equation}
    Straightforward calculations show that this is equivalent to
    \begin{equation}\label{eq:fb}
        f(\mathbb{B}) := \mu\left(\sum_{j=1}^3 b_{j1}^2 + b_{j2}^2\right) + \delta \sum_{j=1}^3 b_{j3}^2 + (\mu+\lambda)\left(b_{11} + b_{22} + b_{33}\right)^2 \geq \beta|\mathbb{B}|^2.
    \end{equation}
    Clearly both $\mu$ and $\delta$ need to be positive. Moreover, simplifying the notation to $b_j := b_{jj}$ for $j = 1,2,3$, $\lambda$ needs to be such that
    \begin{equation}
        (\mu-\beta) (b_1^2 + b_2^2) + (\delta-\beta) b_3^2 + (\mu+\lambda)(b_1+b_2+b_3)^2 \geq 0
    \end{equation}
    for some $\beta \in (0,\delta]$. As it is not difficult to show that
    \begin{equation}
        \inf_{b_1 + b_2 + b_3 \neq 0} \frac{b_1^2+b_2^2 + \frac{D}{M}b_3^2}{(b_1+b_2+b_3)^2} = \frac{D}{M+2D},
    \end{equation}
    we conclude that it needs to hold
    \begin{equation}\label{eq:VC0}
        V(\beta) := \frac{(\mu-\beta)(\delta-\beta)}{\mu+2\delta-3\beta} + \mu+\lambda \geq 0
    \end{equation}
    for some $\beta \in (0,\delta]$. It is a matter of simple computation to observe that $V(\beta)$ is a nonincreasing function of $\beta$, hence the necessary condition for \eqref{eq:VC0} to hold for some $\beta \in (0,\delta]$ is
    \begin{equation}
        V(0) = \frac{\mu\delta}{\mu+2\delta} + \mu+\lambda > 0,
    \end{equation}
    which is nothing else than \eqref{eq:visc_ass}.
    The constant $\beta$ in \eqref{eq:strong_el_def} is then given by
    \begin{equation}\label{eq:C0def}
        \beta = \min\left\{\delta, \frac{ 4\mu+\delta+3\lambda - \sqrt{12\mu^2+\delta^2+9\lambda^2-4\mu\delta+20\mu\lambda-2\lambda\delta}}{2}\right\}.
    \end{equation}
\end{proof}

\begin{corollary}\label{co:beta}
For $\mu, \delta, \lambda$ satisfying \eqref{eq:visc_ass} it holds
\begin{equation}
    \beta|\Grad \vu|^2 \leq \mu|\nabla_x \vu|^2 + \delta |\de_z \vu|^2 + (\mu+\lambda)|\Div \vu|^2
\end{equation}
with $\beta$ given by \eqref{eq:C0def}.
\end{corollary}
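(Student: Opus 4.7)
The plan is to apply the pointwise algebraic inequality \eqref{eq:fb} established in the proof of Lemma \ref{l:elip} at each point $y \in \Omega$, substituting for $\mathbb{B}$ the Jacobian matrix of the velocity field, that is, taking $b_{jl} := \de_{y_l} u_j(y)$. The crucial observation is that \eqref{eq:fb} was proved in Lemma \ref{l:elip} for every matrix $\mathbb{B} \in \R^{3\times 3}$, with no symmetry or other structural restriction, so such a pointwise substitution is admissible.

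With $\mathbb{B} = \Grad \vu$ one immediately reads off the elementary identifications $|\mathbb{B}|^2 = |\Grad \vu|^2$, $\sum_{j=1}^3 (b_{j1}^2 + b_{j2}^2) = |\nabla_x \vu|^2$, $\sum_{j=1}^3 b_{j3}^2 = |\de_z \vu|^2$, and $b_{11} + b_{22} + b_{33} = \de_{y_1} u_1 + \de_{y_2} u_2 + \de_{y_3} u_3 = \Div \vu$. Inserting these into \eqref{eq:fb} converts the left-hand side into $\mu|\nabla_x \vu|^2 + \delta|\de_z \vu|^2 + (\mu+\lambda)|\Div \vu|^2$ and the right-hand side into $\beta|\Grad \vu|^2$, which is precisely the inequality claimed in the corollary, with exactly the constant $\beta$ produced in \eqref{eq:C0def}.

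No real obstacle is expected: the genuine work — the optimization over $\beta \in (0,\delta]$, the identification of the infimum $D/(M+2D)$ in the trace term, and the extraction of the explicit value of $\beta$ — has already been carried out inside the proof of Lemma \ref{l:elip}. The corollary is therefore only a pointwise rephrasing of that matrix-level quadratic-form bound in terms of the components of $\Grad\vu$, and it can be written out in a few lines once the relabelling is performed.
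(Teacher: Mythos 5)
Your proposal is correct and is exactly the argument the paper intends: the corollary is stated without a separate proof precisely because it is the pointwise specialization of \eqref{eq:fb} to $\mathbb{B} = \Grad\vu$ (i.e.\ $b_{jl} = \de_{y_l}u_j$), with the constant $\beta$ already supplied by \eqref{eq:C0def} in the proof of Lemma \ref{l:elip}. Your explicit remark that \eqref{eq:fb} holds for arbitrary, not necessarily symmetric, matrices is the only point worth noting, and you handle it correctly.
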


\begin{lemma}\label{l:convex}
   Let $\mu,\delta, \lambda$ satisfy \eqref{eq:visc_ass}. Then, the function $f : \R^{3\times3} \to \R$ defined in \eqref{eq:fb} is strictly convex.
\end{lemma}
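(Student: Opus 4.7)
The key observation is that $f$ is a purely quadratic form in the entries of $\mathbb{B}$, so the question of strict convexity reduces to positive definiteness of its associated symmetric bilinear form. My plan is to exploit this directly using the estimate established inside the proof of Lemma~\ref{l:elip}.

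First, I would verify the polarization identity for any quadratic form $f$: if one defines the symmetric bilinear form $Q(\mathbb{B}_1,\mathbb{B}_2) = \tfrac{1}{2}\bigl(f(\mathbb{B}_1+\mathbb{B}_2) - f(\mathbb{B}_1) - f(\mathbb{B}_2)\bigr)$, then a direct expansion gives, for $t\in(0,1)$ and $\mathbb{B}_1\neq \mathbb{B}_2$,
\begin{equation*}
t f(\mathbb{B}_1) + (1-t) f(\mathbb{B}_2) - f\bigl(t\mathbb{B}_1 + (1-t)\mathbb{B}_2\bigr) = t(1-t)\,f(\mathbb{B}_1 - \mathbb{B}_2).
\end{equation*}
Hence strict convexity of $f$ is equivalent to $f(\mathbb{B})>0$ for every $\mathbb{B}\neq 0$.

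Next, I would invoke the strong ellipticity already proved in Lemma~\ref{l:elip}, namely the bound $f(\mathbb{B}) \geq \beta |\mathbb{B}|^2$ with the explicit constant $\beta>0$ given by \eqref{eq:C0def}. For any $\mathbb{B}\neq 0$ this immediately yields $f(\mathbb{B}) > 0$, so by the polarization identity above $f$ is strictly convex. This is really the only step that uses the assumption \eqref{eq:visc_ass}: it is exactly what guarantees that $\beta>0$.

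I do not expect any significant obstacle here. The only subtlety is to make sure the quadratic-form argument is stated cleanly (the Hessian of $f$ is constant and coincides, up to a factor $2$, with the symmetric bilinear form $Q$, so positive definiteness of $Q$, established via Lemma~\ref{l:elip}, is equivalent to positive definiteness of the Hessian, hence to strict convexity of $f$). One could alternatively phrase the proof by computing the Hessian of $f$ explicitly and checking it is positive definite, but the polarization identity above is the cleanest route and reuses Lemma~\ref{l:elip} without any extra computation.
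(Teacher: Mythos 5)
Your proof is correct, but it takes a different route from the paper. You reduce strict convexity of the quadratic form $f$ to its positivity via the polarization identity $t f(\mathbb{B}_1)+(1-t)f(\mathbb{B}_2)-f(t\mathbb{B}_1+(1-t)\mathbb{B}_2)=t(1-t)f(\mathbb{B}_1-\mathbb{B}_2)$, and then get $f(\mathbb{B})\geq\beta|\mathbb{B}|^2>0$ for $\mathbb{B}\neq 0$ directly from the coercivity bound \eqref{eq:fb} with $\beta>0$ given by \eqref{eq:C0def}, i.e.\ from what is already asserted in Lemma \ref{l:elip} (and restated in Corollary \ref{co:beta}); no new computation is needed. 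The paper instead writes out the constant $9\times 9$ Hessian $\mathbb{H}_f$ of $f$ explicitly and lists its eigenvalues ($\mu$ with multiplicity five, $\delta$ with multiplicity two, and $\eta_{8,9}$ the roots of a quadratic), concluding positive definiteness from $\eta_9>\eta_8>0$. The two arguments rest on exactly the same underlying fact — positivity of the smallest eigenvalue, equivalently $\beta>0$ under \eqref{eq:visc_ass} — so your proof is no less rigorous than the paper's; what the paper's explicit spectral computation buys is an independent verification of that positivity together with the precise eigenvalues (note that $\eta_8$ coincides with the second argument of the minimum in \eqref{eq:C0def}), whereas your argument is shorter and cleanly recycles Lemma \ref{l:elip}. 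One small presentational point: it is worth stating explicitly that $f$ in \eqref{eq:fb} is homogeneous of degree two, so that the polarization identity applies.
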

\begin{proof}
    As $f(\mathbb{B})$ is a quadratic function of its nine entries, its Hessian matrix takes form
    \begin{equation*}
        \mathbb{H}_f = 2\left(
        \begin{array}{ccccccccc}
            2\mu+\lambda & 0 & 0 & 0 & \mu+\lambda & 0 & 0 & 0 & \mu+\lambda \\
            0 & \mu & 0 & 0 & 0 & 0 & 0 & 0 & 0 \\
            0 & 0 & \delta & 0 & 0 & 0 & 0 & 0 & 0 \\
            0 & 0 & 0 & \mu & 0 & 0 & 0 & 0 & 0 \\
            \mu + \lambda & 0 & 0 & 0 & 2\mu + \lambda & 0 & 0 & 0 & \mu + \lambda \\
            0 & 0 & 0 & 0 & 0 & \delta & 0 & 0 & 0 \\
            0 & 0 & 0 & 0 & 0 & 0 & \mu & 0 & 0 \\
            0 & 0 & 0 & 0 & 0 & 0 & 0 & \mu & 0 \\
            \mu + \lambda & 0 & 0 & 0 & \mu + \lambda & 0 & 0 & 0 & \mu+\lambda + \delta
        \end{array}
        \right).
    \end{equation*}
    Its eigenvalues are $\eta_1 = ... = \eta_5 = \mu$, $\eta_6 = \eta_7 = \delta$ and
    \begin{equation*}
        \eta_{8,9} = \frac{ 4\mu+\delta+3\lambda \pm \sqrt{12\mu^2+\delta^2+9\lambda^2-4\mu\delta+20\mu\lambda-2\lambda\delta}}{2}.
    \end{equation*}
    It follows from the proof of Lemma \ref{l:elip} (see \eqref{eq:C0def}) that $\eta_9 > \eta_8 > 0$. Hence, all eigenvalues of $\mathbb{H}_f$ are strictly positive, $\mathbb{H}_f$ is positive definite and $f$ is strictly convex.
\end{proof}





Next lemma concerns the local weak compactness on unbounded domains, called invading domains lemma. In the case of $X = L^q_{loc}$ it was proved in \cite[Lemma 6.6]{n}, its generalization for Radon measures is straightforward. For convenience, we provide its proof.
\begin{lemma}\label{l2.3}
Let either $X = L^q_{loc}$, $1<q\leq\infty$, or $X = \mathcal{M}$. Let $M_0 \in \N$ and let $\{f_n\}$, $f_n\in L^p((0,T);X(\mathbb{R}^3))$, with $1< p \leq \infty$ be a sequence such that
\begin{align*}
\|f_n\|_{L^p(0,T;X(B_M))}\leq K(M) \hspace{2pt}\text{for}\hspace{2pt} M=M_0,M_0+1,M_0+2....
\end{align*}
Then there exists a subsequence such that $f_n\rightarrow f$ weakly$-\ast$ in $L^p(0,T;X(B_R))$ for any $R>0$.
\end{lemma}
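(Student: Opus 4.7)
The plan is a standard diagonal extraction exploiting sequential weak-$\ast$ compactness of bounded sets in the Bochner spaces $L^p(0,T;X(B_M))$ and then passing to an arbitrary $R>0$ by inclusion. First, I would set up the duality framework so that Banach-Alaoglu applies on each bounded ball. For $X=L^q_{loc}$ with $1<q<\infty$ the space $X(B_M)=L^q(B_M)$ is reflexive; for $q=\infty$ we have $L^\infty(B_M)=(L^1(B_M))^\ast$ with separable predual; and for $X=\M$ we have $\M(\overline{B_M})=(C(\overline{B_M}))^\ast$, again with separable predual. In each case $X(B_M)=Y_M^\ast$ for a separable Banach space $Y_M$ that is also a dual, so $L^p(0,T;X(B_M))$ can be identified as the dual of $L^{p'}(0,T;Y_M)$ (which is separable since $1\le p'<\infty$), using that separable dual spaces have the Radon-Nikodym property (and, when $p=\infty$, the standard identification $L^\infty(0,T;Y_M^\ast)=(L^1(0,T;Y_M))^\ast$).

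Next, for each fixed integer $M\ge M_0$, the assumed bound $\|f_n\|_{L^p(0,T;X(B_M))}\le K(M)$ together with sequential Banach-Alaoglu applied in the above dual framework yields a subsequence converging weakly-$\ast$ in $L^p(0,T;X(B_M))$ to some limit $f^{(M)}$. Performing this extraction first at $M=M_0$, then refining it at $M_0+1$, $M_0+2,\dots$, and finally taking the diagonal subsequence produces a single subsequence, still denoted $\{f_n\}$, which converges weakly-$\ast$ in $L^p(0,T;X(B_M))$ for every integer $M\ge M_0$.

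I would then verify compatibility of the limits. Since restriction of test functions from $B_{M+1}$ to $B_M$ is continuous into the predual, weak-$\ast$ convergence on $B_{M+1}$ implies weak-$\ast$ convergence on $B_M$ with limit $f^{(M+1)}\big|_{B_M}$; by uniqueness of weak-$\ast$ limits, $f^{(M+1)}\big|_{B_M}=f^{(M)}$. The family $\{f^{(M)}\}$ thus glues into a single $f$ defined on $(0,T)\times\R^3$. For an arbitrary $R>0$, choose $M\in\N$ with $M\ge\max\{R,M_0\}$; weak-$\ast$ convergence in $L^p(0,T;X(B_M))$ restricts to weak-$\ast$ convergence in $L^p(0,T;X(B_R))$, which is the desired conclusion.

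The only real obstacle is the first step, namely the identification of $L^p(0,T;X(B_M))$ as the dual of a separable Banach space so that the sequential Banach-Alaoglu theorem is actually available; this is where the three cases for $X$ and the endpoint $p=\infty$ must each be handled, using separability of $L^1$, of $C(\overline{B_M})$, and the Radon-Nikodym property of separable dual spaces. Once this functional-analytic setup is in place, the diagonalisation and compatibility steps are routine.
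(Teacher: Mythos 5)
Your proposal is correct and follows essentially the same route as the paper: Banach--Alaoglu on each ball $B_{M_0+k}$, nested extraction with compatibility of the limits, gluing, and a diagonal subsequence. The paper simply invokes Banach--Alaoglu without spelling out the duality/predual identifications that you detail, so your extra care on that point is a refinement rather than a different argument.
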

\begin{proof}
    As a consequence of the Banach-Alaoglu theorem, there exists a subsequence $\{f_{n_k^0}\} \subset \{f_n\}$ such that $f_{n_k^0} \to f^0$ weakly$-\ast$ in $L^p(0,T;X(B_{M_0}))$. The same theorem implies also the existence of $\{f_{n_k^1}\} \subset \{f_{n_k^0}\}$ such that $f_{n_k^1} \to f^1$ weakly$-\ast$ in $L^p(0,T;X(B_{M_0 + 1}))$. Moreover it clearly holds $f^1 = f^0$ in $(0,T)\times B_{M_0}$. We continue by induction and construct a sequence of weak$-\ast$ limits $\{f^k\}$. We set
    \begin{equation}
        f(t,x) = f^k(t,x) \qquad \text{ for } x \in B_{M_0+k},
    \end{equation}
    which in the case $X = \mathcal{M}$ translates to
    \begin{equation}
        \langle f(t,\cdot),\varphi(t,\cdot) \rangle = \langle f^k(t,\cdot),\varphi(t,\cdot)\rangle \qquad \text{ for } \text{supp}\ \varphi(t,\cdot) \subset B_{M_0+k} \text{ for a.a. } t \in (0,T).
    \end{equation}
    Choosing a sequence $\{f_{n_j^j}\}$ we end up with $f_{n_j^j} \to f$ weakly$-\ast$ in $L^p(0,T,X(B_{M_0 + k}))$ for any $k \in \mathbb{N}$.
\end{proof}

The final lemma shows the lower weak semicontinuity for sequences of Radon measures.
\begin{lemma}(\cite{ev} Theorem 3.1 in Chapter 1)\label{l2.4}
Let $U$ be an open, bounded, smooth subset of $\mathbb{R}^n$. Assume $\mu_k\rightarrow\mu$ weakly in $\mathcal{M}(U)$, then for each open set $V\subset U$, we have
\begin{align}
\mu(V)\leq\liminf_{k\rightarrow\infty}\mu_k(V).
\end{align}

\end{lemma}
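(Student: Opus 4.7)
The plan is to reduce this classical lower semicontinuity statement to the very definition of weak$-\ast$ convergence $\int \varphi \,\d\mu_k \to \int \varphi \,\d\mu$ for all $\varphi \in C_c(U)$, by sandwiching $\mu(V)$ between a compact inner approximation from below and a smooth cutoff from above. Concretely, first I would fix $\ep > 0$ and use inner regularity of the Radon measure $\mu$ on the open set $V$ to pick a compact $K \subset V$ with $\mu(K) \geq \mu(V) - \ep$. Since $K$ is compact and $V$ is open in $\R^n$, Urysohn's lemma produces a test function $\varphi \in C_c(V)$ with $0 \leq \varphi \leq 1$ and $\varphi \equiv 1$ on $K$.

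With this $\varphi$ in hand, the two pointwise bounds $\mathbf{1}_K \leq \varphi \leq \mathbf{1}_V$ yield
\begin{equation*}
\mu(K) \leq \int_U \varphi \,\d\mu, \qquad \int_U \varphi \,\d\mu_k \leq \mu_k(V),
\end{equation*}
where the second inequality uses that the measures $\mu_k$ are non-negative (which is the setting in which Evans states the result, and is the relevant setting for this paper, since the Radon measures arising as energy and Reynolds defects are non-negative by construction). Weak$-\ast$ convergence $\mu_k \to \mu$ in $\M(U)$, applied to $\varphi \in C_c(U)$, gives $\int \varphi \,\d\mu_k \to \int \varphi \,\d\mu$. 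Chaining these gives
\begin{equation*}
\mu(V) - \ep \;\leq\; \mu(K) \;\leq\; \int_U \varphi \,\d\mu \;=\; \lim_{k \to \infty} \int_U \varphi \,\d\mu_k \;\leq\; \liminf_{k \to \infty} \mu_k(V).
\end{equation*}
Since $\ep > 0$ was arbitrary, letting $\ep \to 0$ delivers the claim $\mu(V) \leq \liminf_{k\to\infty} \mu_k(V)$.

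There is no real obstacle here beyond the bookkeeping: the only mildly delicate step is the construction of the cutoff $\varphi$, and the only hidden hypothesis is the non-negativity of the $\mu_k$ (without which the key bound $\int \varphi \,\d\mu_k \leq \mu_k(V)$ fails in general). The proof is self-contained given Urysohn's lemma and the definition of weak$-\ast$ convergence of Radon measures; in particular, no further approximation of $\mathbf{1}_V$ by smooth functions of the type required for the dual compact-set statement $\mu(K) \geq \limsup \mu_k(K)$ is needed.
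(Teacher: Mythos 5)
Your proof is correct: the paper offers no argument of its own for this lemma (it simply cites Evans), and your inner-regularity plus Urysohn-cutoff argument is precisely the standard proof given in that reference, including the correct observation that non-negativity of the $\mu_k$ is what makes $\int \varphi\,\d\mu_k \leq \mu_k(V)$ valid, which is the setting in which the lemma is applied to the defect measures here.
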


Next, we define the dissipative turbulent solution.

\begin{definition}\label{d:solution}

Assume that $\mathbf f\in L^\infty((0,T)\times\Omega)\cap L^\infty(0,T;L^1(\Omega)). $ A \emph{dissipative turbulent solution} of the anisotropic Navier-Stokes system \eqref{1a}--\eqref{eq:IC} on $(0,T)\times\Omega$ is a couple $(\rho,\vu)$ such that

\noindent
$\bullet$
\begin{align*}
&\rho\in C_{weak}([0,T];L^\gamma_{loc}(\Omega)),\hspace{3pt} \rho\geq0,\\
&\rho\mathbf u\in C_{weak}([0,T];L^{\frac{2\gamma}{\gamma+1}}_{loc}(\Omega)),\hspace{3pt}\mathbf u\in L^2(0,T;W^{1,2}_0(\Omega)), \\
&\rho|\mathbf{u}|^2\in L^\infty(0,T;L^1(\Omega)), \hspace{3pt} \mathcal{E}^{\rhoin}(\rho) \in L^\infty(0,T;L^1(\Omega)).
\end{align*}

\noindent
$\bullet$
The continuity equation is satisfied in the sense of distributions, i.e.
\begin{equation}\label{2.1}
\int_{\Omega}(\rho\varphi)(\tau,y) \dy - \int_{\Omega}\rho_0(y)\varphi(0,y) \dy = \int^\tau_0\int_{\Omega}\left(\rho\partial_t\varphi + \rho\mathbf{u}\cdot \Grad\varphi\right) \dydt,
\end{equation}
for all $\varphi\in C^1_c([0,T]\times\overline{\Omega})$ and almost all $\tau \in [0,T]$;

\noindent
$\bullet$
There exists a Reynolds defect measure $\mathfrak{R}\in L^\infty(0,T;\MM(\overline{\Omega}))$ such that the
momentum equation is satisfied in the sense
\begin{align}
&\int_{\Omega}(\rho\mathbf u\cdot\vphi)(\tau,y) \dy -
\int_{\Omega}(\rho\mathbf u)_0(y)\cdot\vphi(0,y) \dy - \int^\tau_0\int_\Omega\Grad\vphi:\d\mathfrak{R}(t)\dt \nonumber \\
&\qquad = \int^\tau_0\int_{\Omega}\left(\rho\mathbf u\cdot\partial_t\vphi + \rho\mathbf{u}\otimes\mathbf{u}:\Grad\vphi
 + p(\rho)\Div\vphi\right)\dydt \nonumber \\
&\qquad - \int^\tau_0\int_{\Omega} \left(\mu\nabla_x\mathbf u:\nabla_x\vphi + \delta\partial_z\mathbf u\cdot \partial_z\vphi + (\mu+\lambda)\Div\mathbf u\ \Div\vphi
+\rho \mathbf{f}\cdot\vphi\right)\dydt, \label{2.2}
\end{align}
for all $\vphi\in C^1_c([0,T]\times\overline{\Omega})$ such that $\vphi|_{\partial\Omega}=0$ and almost all $\tau \in [0,T]$;

\noindent
$\bullet$
There exists an energy defect measure $\mathfrak{E}\in L^\infty(0,T;\mathcal{M}^{+}(\overline{\Omega}))$ such that
the energy inequality holds as
\begin{equation}\label{2.3}
\begin{split}
&\int_{\Omega}\left(\frac{1}{2}\rho|\mathbf u|^2+\mathcal{E}^{\rho_\infty}(\rho)\right)(\tau,y)\dy +
\int_\Omega d\mathfrak{E}(\tau) \\
&\quad + \int^\tau_0\int_\Omega\left(\mu|\nabla_x\mathbf u|^2+\delta|\partial_z\mathbf u|^2+(\mu+\lambda)|\Div\mathbf u|^2\right)\dydt \\
&\quad \leq \int_{\Omega}\left(\frac{1}{2}\frac{|(\rho\mathbf u)_0|^2}{\rho_0}+\mathcal{E}^{\rho_\infty}(\rho_0)\right)(y)\dy + \int_0^\tau \int_\Omega \rho \vf\cdot\vu \dydt
\end{split}
\end{equation}
for almost all $\tau\in[0,T]$, where $\mathcal{E}^{\rhoin}$ is given by \eqref{eq:Einf};

\noindent
$\bullet$
The compatibility condition between the energy defect measure $\mathfrak{E}$ and the Reynolds defect $\mathfrak{R}$ holds, i.e. there exists constants $0<\underline{d}\leq\overline{d}$ such that
\begin{align}\label{2.5}
\underline{d}\mathfrak{E}\leq \tr[\mathfrak{R}]\leq \overline{d}\mathfrak{E}.
\end{align}

\end{definition}

\section{Main result}\label{s:3}

Now, we are ready to state our main results. The first theorem is the existence of a dissipative turbulent solution.
\begin{theorem}\label{t3.1}
Let $\Omega\subset\mathbb{R}^3$ be an unbounded uniformly Lipschitz domain such that $\Omega=\bigcup^\infty_{R=1}\Omega_R$, where $\Omega_R$ are bounded Lipschitz domains that satisfy $\Omega_R\subset\Omega_{R+1}$. Let $T > 0$, $\gamma > 1$ and let $\mu, \delta$ and $\lambda$ satisfy \eqref{eq:visc_ass}. Let $\vf \in L^\infty((0,T)\times\Omega) \cap L^\infty(0,T;L^1(\Omega))$. Let $\rho_0 \in L^1_{loc}(\Omega)$, $\rho_0 \geq 0$, $(\rho\vu)_0 \in L^{\frac{2\gamma}{\gamma+1}}_{loc}(\Omega)$ such that $(\rho\vu)_0 = 0$ whenever $\rho_0 = 0$ and let
\begin{equation}
    \int_{\Omega} \left(\frac12 \frac{|(\rho\vu)_0|^2}{\rho_0} + \mathcal{E}^{\rho_\infty}(\rho_0) \right) \dy < \infty.
\end{equation}
Then the anisotropic compressible Navier-Stokes system \eqref{1a}--\eqref{eq:IC} admits at least one dissipative turbulent solution $(\rho,\vu)$ on $(0,T)\times\Omega$ in the sense of Definition \ref{d:solution}.
\end{theorem}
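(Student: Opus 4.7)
\medskip

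\noindent\textbf{Proof plan.} The strategy is the standard invading domains method combined with the dissipative turbulent solution framework of \cite{e5}. I would proceed in two layers: first construct a dissipative turbulent solution on each bounded approximating domain $\Omega_R$, then pass to the limit $R \to \infty$ using Lemma \ref{l2.3}.

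\emph{Step 1: Construction on bounded $\Omega_R$.} For each $R \in \N$, build approximate initial data $(\rho_{0,R}, (\rho\vu)_{0,R})$ by truncation and regularization of $(\rho_0, (\rho\vu)_0)$ restricted to $\Omega_R$, arranged so that $\rho_{0,R} \to \rho_\infty$ in a neighborhood of $\de\Omega_R$ and the energy $\int_{\Omega_R} (\tfrac12 |(\rho\vu)_{0,R}|^2/\rho_{0,R} + \mathcal E^{\rho_\infty}(\rho_{0,R}))\dy$ is bounded uniformly in $R$. Then follow the program of \cite{e5}, adapted to the anisotropic, non-symmetric stress tensor $\S$: set up a Galerkin approximation for the momentum equation coupled with a regularized continuity equation (vanishing artificial viscosity $\varepsilon \Delta\rho$ and pressure regularization $p_\eta(\rho) = p(\rho) + \eta \rho^{\Gamma}$ for large $\Gamma$), derive the basic energy identity using Corollary \ref{co:beta} to control $\|\Grad \vu\|_{L^2}$, and pass successively to the limit in the Galerkin, artificial viscosity, and pressure regularization parameters. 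The strict convexity of $f$ established in Lemma \ref{l:convex} substitutes for the symmetry of $\S$ assumed in \cite{e5}: in the limit passages, Jensen's inequality applied to $f$ together with lower semicontinuity generates the nonnegative energy defect $\mathfrak E_R$, while the compactness defects arising from $\rho\vu\otimes\vu$ and $p(\rho)$ are packaged into a positive semidefinite Reynolds defect $\mathfrak R_R$. The compatibility $\underline d \mathfrak E_R \le \tr[\mathfrak R_R] \le \overline d \mathfrak E_R$ follows from the construction exactly as in \cite{e5}, with constants $\underline d, \overline d$ depending only on $\mu, \delta, \lambda, \gamma$.

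\emph{Step 2: Extension and uniform bounds.} Extend $(\rho_R, \vu_R)$ to $\Omega$ by $\rho_R \equiv \rho_\infty$, $\vu_R \equiv 0$, and extend $\mathfrak E_R, \mathfrak R_R$ by zero outside $\overline{\Omega_R}$. The energy inequality \eqref{2.3} applied on $\Omega_R$, combined with Grönwall on the $\rho\vf\cdot\vu$ term (estimated via $\vf \in L^\infty \cap L^\infty L^1$ and Young's inequality), yields uniform bounds
\begin{equation*}
\sup_{t\in(0,T)} \int_\Omega \Big(\tfrac12 \rho_R|\vu_R|^2 + \mathcal E^{\rho_\infty}(\rho_R)\Big)\dy + \|\mathfrak E_R\|_{L^\infty(0,T;\M^+(\overline\Omega))} + \|\Grad\vu_R\|_{L^2((0,T)\times\Omega)}^2 \le K,
\end{equation*}
together with $\|\mathfrak R_R\|_{L^\infty(0,T;\M^+_{3\times 3})} \le \overline d K$ via \eqref{2.5}. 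These in turn give locally uniform $L^p$-in-time bounds on $\rho_R$ in $L^\gamma_{loc}$, on $\rho_R\vu_R$ in $L^{2\gamma/(\gamma+1)}_{loc}$, and $L^2$-in-time bounds on $\rho_R|\vu_R|^2$ in $L^{3\gamma/(\gamma+3)}_{loc}$ (plus a higher exponent by interpolation with $\Grad\vu_R$), so the continuity and momentum equations give equicontinuity in time in negative Sobolev norms.

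\emph{Step 3: Passage to the limit $R \to \infty$.} Apply Lemma \ref{l2.3} with $X = L^q_{loc}$ for the fluid quantities and $X = \M$ for the defect measures to extract a common subsequence converging weakly$-\ast$ on every ball $B_M \cap \Omega$, producing limits $(\rho, \vu, \mathfrak E, \mathfrak R)$ on $\Omega$. The Aubin–Lions lemma applied locally yields $\rho_R \to \rho$ and $\rho_R\vu_R \to \rho\vu$ in $C_{\mathrm{weak}}([0,T];L^\gamma_{loc})$ and $C_{\mathrm{weak}}([0,T];L^{2\gamma/(\gamma+1)}_{loc})$ respectively, hence also $\rho_R\vu_R \otimes \vu_R \rightharpoonup \overline{\rho\vu\otimes\vu}$ and $p(\rho_R) \rightharpoonup \overline{p(\rho)}$ in the sense of distributions. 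The pointwise defects $\overline{\rho\vu\otimes\vu} - \rho\vu\otimes\vu$ and $(\overline{p(\rho)} - p(\rho))\I$ are absorbed into $\mathfrak R$, and Lemma \ref{l2.4} provides the weak lower semicontinuity required to preserve the energy inequality \eqref{2.3} and the compatibility \eqref{2.5} in the limit, where for the latter one tests with compactly supported cutoffs and uses that $\mathfrak E, \mathfrak R$ restricted to any $\overline{\Omega_M}$ are the weak limits of $\mathfrak E_R\llcorner \overline{\Omega_M}, \mathfrak R_R \llcorner \overline{\Omega_M}$ for $R \ge M$.

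\emph{Main obstacle.} The principal difficulty is that, in contrast to the Lions–Feireisl theory, strong convergence of $\rho_R$ is unavailable because the anisotropic structure of the viscous stress prevents the effective viscous flux identity \eqref{1c}. The dissipative turbulent framework circumvents this by incorporating the concentration/oscillation defects directly into $\mathfrak R$, but one must carefully verify that the defect arising from the convective term is genuinely positive semidefinite and is controlled from above by $\mathfrak E$ uniformly in $R$; this requires combining strict convexity of the kinetic energy $\tfrac12 \rho|\vu|^2$ and of $\mathcal E^{\rho_\infty}$ with the construction of $\mathfrak R_R$ on $\Omega_R$ so that the constants $\underline d, \overline d$ in \eqref{2.5} do not deteriorate as $R \to \infty$. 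A secondary technical point is the handling of the far-field condition, which is enforced through the integrability of $\mathcal E^{\rho_\infty}(\rho)$ rather than pointwise convergence; the extension by $\rho_\infty$ outside $\Omega_R$ is compatible with this because $\mathcal E^{\rho_\infty}(\rho_\infty) = 0$.
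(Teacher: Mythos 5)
Your overall route is the same as the paper's: a construction on each bounded $\Omega_R$ following \cite{e5} but redone because the anisotropic stress is not symmetric (Galerkin basis built from the anisotropic Lam\'e operator, artificial viscosity in the continuity equation, with the coercivity/convexity input coming from Corollary \ref{co:beta} and Lemma \ref{l:convex}), followed by the invading-domain limit $R\to\infty$ using Lemma \ref{l2.3}, with the convective and pressure defects absorbed into $\mathfrak{R}$, the corresponding energy defects into $\mathfrak{E}$, and lower semicontinuity (Lemma \ref{l2.4} and convexity) preserving \eqref{2.3} and \eqref{2.5} with constants $\min\{1,3(\gamma-1)\}$, $\max\{1,3(\gamma-1)\}$ independent of $R$. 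One remark: the artificial pressure $\eta\rho^\Gamma$ you insert is superfluous in this framework, since no effective viscous flux argument or strong convergence of the density is ever attempted; the paper works with $\varepsilon$-diffusion and Galerkin only.

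However, the plan as written leaves genuine gaps precisely at the unbounded-domain stage, which is the crux of this theorem. (i) Membership $\vu\in L^2(0,T;W^{1,2}_0(\Omega))$ required by Definition \ref{d:solution}: your uniform bounds control only $\Grad\vu_R$ in $L^2$, and Poincar\'e fails on the unbounded $\Omega$, so an extra argument is needed to bound $\vu_R$ itself in $L^2(\Omega)$ uniformly in $R$; the paper uses the Poincar\'e-type inequality of \cite{j} together with the fact that the residual set $\{|\rho_R-\rhoin|\geq \rhoin/2\}$ has measure bounded uniformly in $t$ and $R$ (a consequence of the $\mathcal{E}^{\rhoin}(\rho_R)$ bound) and the bound on $\rho_R|\vu_R|^2$ on the essential set. (ii) Identification $\vm=\rho\vu$: invoking Aubin--Lions locally implicitly requires a compact embedding of $L^\gamma(\Omega_m)$ into a space dual to $W^{1,2}_0(\Omega_m)$, which fails for $1<\gamma<6/5$, whereas the theorem covers all $\gamma>1$; the paper instead uses the Div--Curl type Lemma \ref{l:rhou_iden} from \cite{e5}, which only needs $\de_t\rho_R=\Div\mathbf{g}_R$ with $\mathbf{g}_R$ bounded in some $L^s$, $s>1$, and a measure bound on $\Grad\vu_R$. (iii) The source term $\int_0^\tau\int_\Omega\rho_R\vf\cdot\vu_R\dydt$: since $\rho_R\vu_R$ converges only locally, a Young-type estimate on all of $\Omega$ does not suffice to pass to the limit; one needs a tail estimate on $\Omega\setminus\Omega_N$ uniform in $R$, which the paper obtains by the essential/residual splitting and the integrability of $\vf$ in $L^{2\gamma/(\gamma-1)}\cap L^2$ of the far region (available from $\vf\in L^\infty\cap L^\infty(0,T;L^1)$). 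Without (i)--(iii) the limit object is not shown to be a dissipative turbulent solution in the sense of Definition \ref{d:solution}, so these points must be supplied.
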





\begin{theorem}\label{t:REI}
    Every dissipative turbulent solution to the system \eqref{1a}-\eqref{eq:IC} in the sense of Definition \ref{d:solution} satisfies the following relative entropy inequality
    \begin{equation}\label{eq:REI}
    \begin{split}
        &\int_{\Omega}\left(\frac 12 \rho |\vu-\vU|^2 + \mathcal{E}^r(\rho)\right)(\tau,y) \dy + \int_\Omega \d\mathfrak{E}(\tau) \\
        &\quad + \int^\tau_0\int_\Omega\left(\mu|\nabla_x\mathbf u|^2+\delta|\partial_z\mathbf u|^2+(\mu+\lambda)|\Div\mathbf u|^2\right)\dydt \\
        &\quad \leq \int_{\Omega}\left(\frac{1}{2}\frac{|(\rho\mathbf u)_0 - \rho_0\vU_0|^2}{\rho_0}+\mathcal{E}^{r(0,\cdot)}(\rho_0)\right)(y)\dy + \int_0^\tau \mathcal{R}(\rho,\vu,\mathfrak{R},r,\vU)(t) \dt
    \end{split}
    \end{equation}
    for almost all $\tau \in (0,T)$ and all test functions $r,\vU$ such that $r-\rhoin, \vU \in C^1_c([0,T]\times\overline{\Omega})$, $\vU|_{\partial\Omega} = 0$. Here, similarly as in \eqref{eq:Einf}, 
    \begin{equation}\label{eq:Erdef}
        \mathcal{E}^r(\rho) = P(\rho) - P(r) - P'(r)(\rho-r) = \frac{1}{\gamma-1}\left(\rho^\gamma+(\gamma-1)r^\gamma-\gamma\rho r^{\gamma-1}\right)
    \end{equation}
    and the remainder term is given as
    \begin{equation}\label{eq:Remainder}
    \begin{split}
    &\mathcal{R}(\rho,\vu,\mathfrak{R},r,\vU)(t) = \int_\Omega \left(\mu\nabla_x\mathbf u:\nabla_x\vU+\delta\partial_z\mathbf u\cdot\partial_z\vU+(\mu+\lambda)\Div\mathbf u\ \Div\vU\right)(t,y)\dy \\
    &\quad - \int_\Omega\left( \rho(\vu-\vU) \cdot(\partial_t\vU + \vu\cdot\nabla_y\vU) + p(\rho)\Div\vU + (\rho-r) \partial_t P'(r)\right)(t,y) \dy \\
    &\quad - \int_\Omega \left( \rho\vu\cdot\nabla_yP'(r) - \rho\vf\cdot(\vu-\vU)\right)(t,y) \dy - \int_\Omega \nabla_y\vU: \d\mathfrak{R}(t).
    \end{split}
    \end{equation}
\end{theorem}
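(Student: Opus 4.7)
The plan is to derive \eqref{eq:REI} by combining the energy inequality with three test-function identities in the classical Feireisl--Novotn\'y manner, adapted to accommodate the Reynolds and energy defect measures and the unbounded domain. The target is to reconstruct $\int(\tfrac12\rho|\vu-\vU|^2 + \mathcal{E}^r(\rho))$ on the left-hand side starting from $\int(\tfrac12\rho|\vu|^2+\mathcal{E}^{\rhoin}(\rho))$ in \eqref{2.3}, with everything else being regrouped into $\mathcal{R}$.

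The first step is to verify admissibility of test functions. The field $\vU$ is legitimate in \eqref{2.2}, and $\tfrac12|\vU|^2 \in C^1_c([0,T]\times\overline{\Omega})$ is legitimate in \eqref{2.1}. Crucially, because $r-\rhoin$ is compactly supported, so is $P'(r)-P'(\rhoin)$, which is therefore admissible in \eqref{2.1} as well; subtracting the constant $P'(\rhoin)$ does not alter the weak formulation but makes the test function compactly supported, which is the key adjustment required by the unbounded domain.

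The core step is to form the linear combination
\[
\text{\eqref{2.3}}\; -\; \text{\eqref{2.2}}\big|_{\vphi=\vU}\; +\; \text{\eqref{2.1}}\big|_{\varphi=\tfrac12|\vU|^2}\; -\; \text{\eqref{2.1}}\big|_{\varphi=P'(r)-P'(\rhoin)}
\]
and recombine the left-hand side using the pointwise identities
\[
\tfrac12\rho|\vu|^2 - \rho\vu\cdot\vU + \tfrac12\rho|\vU|^2 = \tfrac12\rho|\vu-\vU|^2,
\]
\[
\mathcal{E}^{\rhoin}(\rho) - \rho\bigl(P'(r)-P'(\rhoin)\bigr) + \bigl(p(r)-p(\rhoin)\bigr) = \mathcal{E}^r(\rho),
\]
the latter being a direct consequence of \eqref{eq:pPrel}. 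The purely deterministic offset $\int(p(r)-p(\rhoin))(\tau)$ is produced by writing its time increment as $\int_0^\tau\!\int \partial_t p(r)\,\dydt$; using the identity $r\partial_t P'(r) = \partial_t p(r)$ (again from \eqref{eq:pPrel}), this piece combines with the $\rho\partial_t P'(r)$ term produced by the last continuity test to yield exactly the $(\rho-r)\partial_t P'(r)$ contribution in \eqref{eq:Remainder}. The analogous algebraic regrouping at $t=0$ reconstructs $\tfrac12|(\rho\vu)_0-\rho_0\vU_0|^2/\rho_0$ and $\mathcal{E}^{r(0,\cdot)}(\rho_0)$ on the initial-data side.

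On the right-hand side the remaining terms match \eqref{eq:Remainder} after the elementary regrouping $\rho\vu\otimes\vu:\Grad\vU - \rho\vU\cdot(\vu\cdot\nabla_y)\vU = \rho(\vu-\vU)\cdot(\vu\cdot\nabla_y)\vU$ (and the analogous combination for the $\partial_t\vU$ terms), which together produce the $-\rho(\vu-\vU)\cdot(\partial_t\vU+\vu\cdot\nabla_y\vU)$ piece; the viscous-with-$\vU$ terms, the $p(\rho)\Div\vU$ term, the force contribution $\rho\vf\cdot(\vu-\vU)$, and the Reynolds-defect term $-\int\nabla_y\vU:\d\mathfrak{R}$ fall out directly, while the energy defect $\int \d\mathfrak{E}(\tau)$ is untouched and remains on the left. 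I expect the main nuisance to be purely bookkeeping: tracking the $\rhoin$-related constant offsets generated by the compactly supported rewritings of $P'(r)$ and the pressure term, and matching the signs at $t=0$ and $t=\tau$. No additional compactness or regularity beyond Definition \ref{d:solution} is needed, since once Step~1 is granted the whole manipulation is algebraic.
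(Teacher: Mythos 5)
Your proposal is correct and follows essentially the same route as the paper: testing the momentum equation with $\vU$, the continuity equation with $\tfrac12|\vU|^2$ and with $P'(r)-P'(\rhoin)$, producing the pressure offset via $\partial_t p(r)=r\,\partial_t P'(r)$ from \eqref{eq:pPrel}, and letting the $\rhoin$-terms cancel to yield $\mathcal{E}^r(\rho)$ and the remainder \eqref{eq:Remainder}. The admissibility remark about the compactly supported test function $P'(r)-P'(\rhoin)$ matches the paper's (implicit) treatment, so no gap remains.
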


The following theorem is the weak-strong uniqueness property in the class of dissipative turbulent solution for unbounded domain.
\begin{theorem}\label{t:WSU}
Let $\Omega\subset\mathbb{R}^3$ be an unbounded uniformly Lipschitz domain such that $\Omega=\bigcup^\infty_{R=1}\Omega_R$, where $\Omega_R$ are bounded Lipschitz domains that satisfy $\Omega_R\subset\Omega_{R+1}$. Let $T > 0$, $\gamma > 1$ and let $\mu, \delta$ and $\lambda$ satisfy \eqref{eq:visc_ass} and let $\vf = 0$.
Let $(\rho,\mathbf u)$ be a dissipative turbulent solution to the system \eqref{1a}-\eqref{eq:IC} with the energy defect measure $\mathfrak{E}$ and the Reynolds defect measure $\mathfrak{R}$ and let $(\vrt,\vut)$ be a strong solution of the same problem belonging to the class
\begin{equation}\label{eq:WSU_reg}
\begin{split}
&0 < \inf_{(0,T)\times\Omega}\vrt \leq \vrt(t,y) \leq \sup_{(0,T)\times\Omega}\vrt < \infty, \\
&\nabla_y\vrt \in L^1(0,T;L^{\frac{2\gamma}{\gamma-1}}(\Omega)),\\
&\nabla_y \vut \in L^1(0,T;L^\infty(\Omega)), \\ 
&\nabla^2_y \vut \in L^2(0,T;L^3(\Omega)) \cap L^1(0,T;L^\infty(\Omega)).
\end{split}
\end{equation}
Then
\begin{equation*}
\rho=\vrt,\hspace{2pt}\mathbf u=\vut \quad \text{ in }  (0,T)\times\Omega, \quad \text{ and } \mathfrak{E}=\mathfrak{R}=0.
\end{equation*}
\end{theorem}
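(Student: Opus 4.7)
\textbf{Proof plan for Theorem \ref{t:WSU}.} The plan is to exploit the relative entropy inequality \eqref{eq:REI} with the strong solution $(\vrt,\vut)$ playing the role of the test pair $(r,\vU)$, and then control the resulting remainder term by the left-hand side of \eqref{eq:REI} itself, so that a Gronwall argument closes the proof.

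First I would address the admissibility issue: Theorem \ref{t:REI} is stated for $r-\rhoin,\vU\in C^1_c([0,T]\times\overline{\Omega})$, whereas $\vrt-\rhoin$ and $\vut$ need not have compact spatial support. I would therefore introduce a smooth cut-off sequence $\psi_R\in C_c^\infty(\overline{\Omega_{R+1}})$ with $\psi_R=1$ on $\Omega_R$, set $r_R=\rhoin+(\vrt-\rhoin)\psi_R$ and $\vU_R=\vut\psi_R$, plug these into \eqref{eq:REI}, and pass to the limit $R\to\infty$ using the regularity \eqref{eq:WSU_reg}, the integrability $\mathcal{E}^{\rhoin}(\rho)\in L^\infty(0,T;L^1(\Omega))$, $\rho|\vu|^2\in L^\infty(0,T;L^1(\Omega))$, and the fact that the Reynolds and energy defect measures are finite on $\overline{\Omega}$. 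This upgrades \eqref{eq:REI} to accept $r=\vrt$, $\vU=\vut$ directly.

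Next I would simplify the remainder $\mathcal{R}(\rho,\vu,\mathfrak{R},\vrt,\vut)$. Using the strong momentum equation (with $\vf=0$) I replace $\partial_t\vut+\vut\cdot\nabla_y\vut$ by $\frac{1}{\vrt}(\mu\Delta_x\vut+\delta\partial_{zz}\vut+(\mu+\lambda)\Grad\Div\vut-\Grad p(\vrt))$, and using the strong continuity equation and the identity $p'(\vrt)=\vrt P''(\vrt)$ I rewrite $\partial_t P'(\vrt)=-P''(\vrt)\Div(\vrt\vut)$. The standard Feireisl–Novotn\'y algebraic manipulations then regroup $\mathcal{R}$ as the sum of (i) a mixed viscous bilinear form in $\vu-\vut$ that combines with the viscous terms on the left of \eqref{eq:REI}, (ii) a convective quadratic form $\int\rho(\vut-\vu)\otimes(\vu-\vut):\nabla_y\vut$, (iii) a pressure term controlled by $\mathcal{E}^{\vrt}(\rho)\|\Div\vut\|_\infty$, and (iv) the defect term $-\int\nabla_y\vut:\d\mathfrak{R}$. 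For (i) I use Lemma \ref{l:elip} and Corollary \ref{co:beta} to absorb the cross viscous terms into $\int(\mu|\nabla_x(\vu-\vut)|^2+\delta|\partial_z(\vu-\vut)|^2+(\mu+\lambda)|\Div(\vu-\vut)|^2)$ via Young's inequality. For (ii) and (iii) I bound them by $\|\nabla_y\vut\|_\infty\int(\frac12\rho|\vu-\vut|^2+\mathcal{E}^{\vrt}(\rho))$. For (iv) I use the compatibility condition \eqref{2.5} giving $|\int\nabla_y\vut:\d\mathfrak{R}|\le\|\nabla_y\vut\|_\infty\overline{d}\,\mathfrak{E}(\overline{\Omega})$, which pairs with the $\int\d\mathfrak{E}$ term on the left.

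Since the initial data coincide, $(\rho\vu)_0=\rho_0\vU_0$ and $\rho_0=r(0,\cdot)$, so the right-hand side of \eqref{eq:REI} reduces to $\int_0^\tau h(t)\mathcal{F}(t)\dt$, where
\begin{equation*}
\mathcal{F}(t):=\int_\Omega\Bigl(\tfrac12\rho|\vu-\vut|^2+\mathcal{E}^{\vrt}(\rho)\Bigr)(t,y)\dy+\int_\Omega\d\mathfrak{E}(t),
\end{equation*}
and $h\in L^1(0,T)$ by the regularity \eqref{eq:WSU_reg}. Gronwall's inequality then yields $\mathcal{F}\equiv0$, which forces $\mathfrak{E}=0$, and by \eqref{2.5} also $\mathfrak{R}=0$, while the strict convexity of $\mathcal{E}^{\vrt}$ and positivity of $\vrt$ give $\rho=\vrt$ and $\vu=\vut$.

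The main obstacle I expect is step two: identifying the correct integrability pairings so that all terms in $\mathcal{R}$ make sense globally on the unbounded $\Omega$. The critical pairing is $\int\rho\vu\cdot\nabla_y P'(\vrt)=\int\rho\vu\cdot P''(\vrt)\nabla_y\vrt$, which is precisely why the hypothesis $\nabla_y\vrt\in L^1(0,T;L^{2\gamma/(\gamma-1)}(\Omega))$ is imposed and must be combined with $\rho\vu\in L^\infty(0,T;L^{2\gamma/(\gamma+1)}_{loc})$ via a careful splitting against the reference density $\rhoin$. The cut-off limit $R\to\infty$ must respect these pairings, and this is the technically delicate part of the argument.
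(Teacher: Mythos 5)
Your overall strategy (relative entropy inequality with $(r,\vU)=(\vrt,\vut)$, then Gronwall) is the same as the paper's, and your cut-off argument for admissibility is a reasonable concretization of the paper's density argument. However, there is a genuine gap in your treatment of the remainder. When you substitute the strong momentum equation, the convective term comes with the weight $\rho$ (the turbulent solution's density), so you obtain
\begin{equation*}
\int_\Omega \rho\,(\partial_t\vut+\vut\cdot\nabla_y\vut)\cdot(\vut-\vu)\dy
=\int_\Omega \frac{\rho}{\vrt}\Bigl(-\nabla_y p(\vrt)+\mu\Delta_x\vut+\delta\partial_{zz}\vut+(\mu+\lambda)\Grad\Div\vut\Bigr)\cdot(\vut-\vu)\dy .
\end{equation*}
Only the part of the viscous contribution with weight $1$ (writing $\tfrac{\rho}{\vrt}=1+\tfrac{\rho-\vrt}{\vrt}$) can be integrated by parts and absorbed, together with the first line of \eqref{eq:Remainder}, into the dissipation $\int(\mu|\nabla_x(\vu-\vut)|^2+\delta|\partial_z(\vu-\vut)|^2+(\mu+\lambda)|\Div(\vu-\vut)|^2)$. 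The leftover term
\begin{equation*}
\int_0^\tau\int_\Omega \frac{\rho-\vrt}{\vrt}\bigl(\mu\Delta_x\vut+\delta\partial_{zz}\vut+(\mu+\lambda)\Grad\Div\vut\bigr)\cdot(\vut-\vu)\dydt
\end{equation*}
is \emph{not} a bilinear form in $\vu-\vut$, cannot be integrated by parts (you have no control of $\nabla_y\rho$), and is not covered by your items (i)--(iv): it is not bounded by $\|\nabla_y\vut\|_{L^\infty}$ times the relative energy. This term is the technical heart of the proof and is exactly why the hypotheses $\nabla^2_y\vut\in L^2(0,T;L^3(\Omega))\cap L^1(0,T;L^\infty(\Omega))$ appear in \eqref{eq:WSU_reg}; your plan never uses them for an estimate (you only invoke them implicitly through ``$h\in L^1$''), and you instead single out the $\nabla_y\vrt$ pairing as the critical one, which is a secondary integrability issue.

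The paper handles the leftover term by splitting the domain according to the density: on $\{\rho<\vrt/2\}$ one uses $1\le C\,\mathcal{E}^{\vrt}(\rho)$ (this also covers vacuum), on $\{\vrt/2\le\rho\le2\vrt\}$ one uses $(\rho-\vrt)^2\le C\,\mathcal{E}^{\vrt}(\rho)$, and on $\{\rho>2\vrt\}$ one uses $\rho\le C\rho^\gamma\le C\,\mathcal{E}^{\vrt}(\rho)$. The first two pieces are estimated by $\|\vut-\vu\|_{L^6}\,\|\nabla^2_y\vut\|_{L^3}\,\mathcal{E}_{rel}^{1/2}$, where $\|\vut-\vu\|_{L^6}\le C\|\nabla_y(\vut-\vu)\|_{L^2}$ is absorbed into the dissipation via Young's inequality and Corollary \ref{co:beta}; the third piece is estimated by $\|\sqrt{\rho}(\vu-\vut)\|_{L^2}\,\|\rho 1_{\rho>2\vrt}\|_{L^1}^{1/2}\,\|\nabla^2_y\vut\|_{L^\infty}$, giving the Gronwall weight $h(t)=\|\nabla_y\vut\|_{L^\infty}+\|\nabla^2_y\vut\|_{L^3}^2+\|\nabla^2_y\vut\|_{L^\infty}$. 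Without supplying an estimate of this type, your Gronwall step does not close, so the proposal as written is incomplete.
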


Here we point out that the local existence of strong solutions has been recently proved by Bocchi, Fanelli and Prange \cite{bo}, however in a bit different regularity class.
\begin{theorem}\label{t:BFP}
    Let $\gamma \geq 1$ and let $\mu \geq \delta > 0$, $\lambda > -\mu-\delta$, $\rhoin > 0$ and $\vf = 0$. Then there exists $\eta > 0$ sufficiently small such that for any initial data $\rho_0$ and $(\rho\vu)_0 = \rho_0\vu_0$ with $\rho_0-\rhoin \in H^2$, $\vu_0 \in B^{\frac 32}_{2,\frac 43}$ satisfying $\|\rho_0-\rhoin\|_{L^\infty} \leq \eta$ there exists a time $T^*$ and a unique strong solution $(\tilde \rho,\tilde \vu)$ to \eqref{1a}-\eqref{eq:IC} such that 
    \begin{equation}\label{eq:SSregularity}
    \begin{split}
     &\tilde \rho-\rhoin\in C([0,T^*]; H^2) \text{ with } \|\tilde\rho -\rhoin\|_{L^\infty(0,T^*;L^\infty)} \leq 4\eta, \\
     &\tilde \vu\in L^\infty(0,T^*;L^2)\cap L^2(0,T^*; L^\infty), \nabla_y\tilde\vu\in L^4(0,T^*;L^2)\cap L^2(0,T^*;L^\infty), \\
     &\nabla^2_y\tilde\vu \in L^4(0,T^*;L^2), \nabla^3_y\tilde\vu \in L^{\frac{4}{3}}(0,T^*;L^2), \partial_t\tilde\vu \in L^{\frac{4}{3}}(0,T^*;H^1).   
    \end{split}
    \end{equation}
    The solution moreover satisfies the energy inequality \eqref{2.3} with $\mathfrak{E} = 0$.
\end{theorem}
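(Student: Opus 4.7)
The plan is to establish local existence and uniqueness via a fixed-point argument in a scaling-critical functional framework, following a Danchin-type strategy adapted to the anisotropic structure of the system. First I would recast the problem around the equilibrium $(\rhoin,0)$: setting $\sigma = \rho - \rhoin$, the momentum equation becomes
\begin{equation*}
\rhoin\partial_t \vu - \mu\Delta_x\vu - \delta\partial_{zz}\vu - (\mu+\lambda)\Grad\Div\vu = -\sigma\partial_t\vu - \rho\vu\cdot\Grad\vu - \Grad p(\rho),
\end{equation*}
coupled with the transport--continuity equation $\partial_t\sigma + \vu\cdot\Grad\sigma + (\rhoin+\sigma)\Div\vu = 0$. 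Under the assumption $\lambda > -\mu-\delta$ (a condition weaker than, and not implying, the ellipticity bound of Lemma \ref{l:elip}, but sufficient for sectoriality of the anisotropic operator on the full space), the operator $A = -\mu\Delta_x - \delta\partial_{zz} - (\mu+\lambda)\Grad\Div$ generates a bounded analytic semigroup with maximal regularity in anisotropic $L^p$--type spaces.

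The regularity class \eqref{eq:SSregularity} is dictated by the critical Besov scaling: the space $B^{3/2}_{2,4/3}$ for $\vu_0$ is precisely the trace space for $\vu \in L^{4/3}_t(H^2) \cap L^\infty_t(H^1)$ under a parabolic evolution of the form $\partial_t\vu + \rhoin^{-1}A\vu = g$. The main technical input is an anisotropic maximal regularity estimate for this linear evolution, which I would obtain by Littlewood--Paley decomposition treating horizontal and vertical frequencies separately, since the principal symbol of $A$ scales differently in these directions. Bernstein inequalities on each dyadic block combined with the paraproduct calculus in anisotropic Besov spaces then give the time-integrability exponents $L^4$ for $\nabla_y\vu$, $L^{4/3}$ for $\nabla^3_y\vu$ and $\partial_t\vu$ listed in \eqref{eq:SSregularity}.

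With these linear estimates in hand, I would set up the solution operator $\Phi:(\tilde\rho,\tilde\vu)\mapsto(\rho,\vu)$ where $\sigma = \rho - \rhoin$ solves the continuity equation driven by $\tilde\vu$ and $\vu$ solves the linear anisotropic parabolic problem with right-hand side assembled from the nonlinear terms evaluated at $(\tilde\rho,\tilde\vu)$. The transport estimate propagates $\|\sigma_0\|_{L^\infty}\leq\eta$ to $\|\sigma(t)\|_{L^\infty}\leq \eta\exp(\int_0^t\|\Div\tilde\vu\|_{L^\infty}\ds)\leq 4\eta$ on a sufficiently short interval $[0,T^*]$, which keeps $\rhoin + \sigma$ uniformly positive and preserves the parabolic character. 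Product estimates such as $\|\sigma\partial_t\vu\|_{L^{4/3}_tL^2}$, $\|\rho\vu\cdot\Grad\vu\|_{L^{4/3}_t L^2}$ and $\|\Grad p(\rho)\|_{L^{4/3}_t L^2} \lesssim \|\rho\|^{\gamma-1}_{L^\infty}\|\Grad\sigma\|_{L^{4/3}_t L^2}$ are controlled using the embedding $H^2(\R^3)\hookrightarrow L^\infty$ and the transport regularity $\sigma - \sigma_0 \in C([0,T^*];H^2)$. Choosing $T^*$ small depending on the norms of the data then closes the contraction and yields a unique fixed point.

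The main obstacle is the anisotropy itself: unlike the isotropic Lamé operator, whose principal symbol decomposes cleanly into gradient and divergence-free parts via the Helmholtz projector, the symbol of $A$ mixes horizontal and vertical scales in a nontrivial way, so standard maximal regularity for compressible Navier--Stokes cannot be applied as a black box. The $H^2$ propagation of the density (without loss) relies on commutator estimates between the transport derivative and second-order spatial derivatives, requiring $\Grad\tilde\vu \in L^1_t(L^\infty)$, which is already encoded in \eqref{eq:SSregularity}. Finally, once the strong solution is constructed, the energy inequality \eqref{2.3} with $\mathfrak{E}=0$ is obtained by testing the momentum equation with $\vut$ itself and integrating by parts against the continuity equation multiplied by $\tfrac12|\vut|^2 + P'(\vrt)$; every term is now integrable by the regularity in \eqref{eq:SSregularity}, so the computation is rigorous and yields equality, not merely inequality.
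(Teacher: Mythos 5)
The first thing to note is that the paper does not prove Theorem \ref{t:BFP} at all in the sense you attempt: its ``proof'' is a citation of Bocchi, Fanelli and Prange \cite{bo}, Theorem 1, together with two remarks (the cited result is stated in the whole space, with boundaries claimed by those authors to be tractable by the same method; and the condition $\lambda>-\mu-\delta$ is extracted from their proof and observed to be weaker than \eqref{eq:visc_ass}). Your proposal instead sketches an independent derivation by a critical-space fixed-point argument with anisotropic maximal regularity. This is, in spirit, a reconstruction of the strategy behind the cited work, so the ``different route'' consists in re-deriving rather than invoking the reference; your parenthetical observation that $\lambda>-\mu-\delta$ is strictly weaker than the Legendre-type condition of Lemma \ref{l:elip} but still makes the symbol of $A$ positive definite on the Fourier side is correct and is exactly why the whole-space linear theory goes through under that hypothesis.

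Viewed as a standalone proof, however, the sketch has genuine gaps. First, the central analytic ingredient --- maximal $L^{4/3}$-in-time regularity for the anisotropic operator with precisely the mixed exponents appearing in \eqref{eq:SSregularity} --- is asserted, not established, and the Littlewood--Paley/Fourier-multiplier argument you describe is intrinsically a whole-space (or torus) technique; the theorem as used in the paper concerns the domain $\Omega$ with the no-slip condition \eqref{eq:BC}, where resolvent and maximal-regularity estimates for the anisotropic Lam\'e operator are exactly the ``more technical difficulties'' the paper acknowledges when quoting \cite{bo}. Second, the trace-space identification is off: $B^{3/2}_{2,4/3}$ arises as the real interpolation space $(H^1,H^3)_{1/4,4/3}$, i.e.\ as the initial-data space matching $\partial_t\vu\in L^{4/3}_t H^1$ and $\vu\in L^{4/3}_t H^3$, not as the trace space of $L^{4/3}_t(H^2)\cap L^\infty_t(H^1)$. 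Third, propagating $\tilde\rho-\rhoin\in C([0,T^*];H^2)$ through the transport equation requires tame product/commutator estimates involving $\Grad\vut\in L^1_t(H^2)$ (available from $\nabla^3_y\vut\in L^{4/3}_tL^2$ and $\nabla_y\vut\in L^4_tL^2$ by H\"older in time), not merely $\Grad\vut\in L^1_t(L^\infty)$ as you claim; likewise the nonlinear bounds on $\sigma\partial_t\vu$ and $\Grad p(\rho)$ need $\sigma\in L^\infty_t H^2$, which must be produced inside the contraction scheme rather than assumed. None of these points is unfixable, but filling them in amounts to rewriting the proof of \cite{bo}; the economical proof, and the one the paper gives, is to quote that theorem and record the caveats about the domain and the admissible range of $\lambda$.
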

\begin{proof}
    See \cite[Theorem 1]{bo}. The authors prove the theorem in the whole space but state in the introduction that domains with boundaries may be handled by the same method at the price of more technical difficulties. They also do not state explicitly the condition for $\lambda$, however from their proof it is clear that $\lambda > -\mu-\delta$ needs to be satisfied. Note that this is less restrictive condition than \eqref{eq:visc_ass}. Finally, we note that Theorem \ref{t:BFP} is a special version of \cite[Theorem 1]{bo} applied to constant behaviour of density at infinity which then implies the choice $\vf = 0$.
\end{proof}



\begin{remark}
It is important to point out that Feireisl and Novotn$\acute{y}$ \cite{e2} recently proved the weak-strong uniqueness for compressible Navier-Stokes system with isotropic viscosity coefficient at unbounded domain. And they gave three types results which include the initial data containing vacuum.
\end{remark}

The rest of the paper is devoted to the proof of Theorems \ref{t3.1}, \ref{t:REI} and \ref{t:WSU}.


\section{Proof of Theorem \ref{t3.1}}\label{s:4}
The existence of dissipative turbulent solution for compressible systems arising from symmetric stress tensors on bounded domains $\Omega_R$ has been proved in \cite{e5}. We point out that the system \eqref{1a} does not fall into the general scheme presented in \cite{e5}, since it does not arise from a symmetric stress tensor. Therefore, for the sake of completeness, we will follow the main steps of the proof in  \cite{e5} and then combine them with the invading domains construction.

We introduce three parameters, $\ep > 0$ denoting the artificial diffusion in the continuity equation, $R \in \N$ denoting the domain $\Omega_R$ where we solve the approximated problem and $n \in \N$ denoting the finite dimension of the function space for the velocity field.

The anisotropic Lam\'e operator $A = -\mu\Delta_x-\delta\partial_{zz}-(\mu+\lambda)\Grad\Div$ enjoys the symmetric and strongly elliptic property as was proved in Lemma \ref{l:elip}. We consider $A$ as an operator from $L^2(\Omega_R)$ to $L^2(\Omega_R)$ with domain $\mathcal{D}(A) = W^{1,2}_0(\Omega_R) \cap W^{2,2}(\Omega_R)$. Standard theory of elliptic operators implies (see also \cite[Lemmas 4.32, 4.33]{n}) that there exists countable sets
\begin{align}
&\{\lambda_i^R\}^{\infty}_{i=1},\hspace{5pt}0<\lambda_1^R\leq\lambda_2^R\leq...,\\
&\{\vph_i^R\}^{\infty}_{i=1}\in W_0^{1,2}(\Omega_R)\cap W^{2,2}(\Omega_R),
\end{align}
such that
\begin{equation}
-\mu\Delta_x\vph_i^R-\delta\partial_{zz}\vph_i^R-(\mu+\lambda)\Grad\Div\vph_i^R=\lambda_i^R\vph_i^R
\end{equation}
and $\{\vph_i^R\}$ is an orthonormal basis in $L^2(\Omega_R)$ and an orthogonal basis in $W_0^{1,2}(\Omega_R)$ with respect to the scalar product $\int_{\Omega_R}(\mu\nabla_x\vu:\nabla_x\vv + \delta\de_z\vu\cdot\de_z\vv + (\mu+\lambda)\Div\vu\ \Div \vv) \dy$. We denote $X_n^R={\rm span}\{\vph_i^R\}^n_{i=1}$.

Next, we introduce the initial conditions for the approximated problem. Given a function $\rho_0 \in L^1(\Omega) \cap L^\gamma(\Omega)$ with $\rho_0 \geq 0$, we construct a sequence $\rho_{0,n} \in C^1(\overline{\Omega})$ such that
\begin{equation}\label{eq:approxICprop}
\overline{\rho_n} > \rho_{0,n} \geq \underline{\rho_{n}} > 0
\end{equation}
for $\overline{\rho_n}, \underline{\rho_n} \in \R^+$ and $\rho_{0,n} \to \rho_0$ in $L^\gamma(\Omega)$. Then, $\rho_{0,n,R}$ is defined as a restriction of $\rho_{0,n}$ to the domain $\Omega_R$ and similarly we denote $\rho_{0,R}$ the restriction of $\rho_0$ to $\Omega_R$.

The initial condition for the velocity is given by $\vu_{0,n,R} \in X_n^R$ such that $\rho_{0,n,R}\vu_{0,n,R} \to (\rho\vu)_{0,R}$ in $L^\frac{2\gamma}{\gamma+1}(\Omega_R)$ as $n \to \infty$, where $(\rho\vu)_{0,R}$ is the restriction of $(\rho\vu)_0$ to $\Omega_R$. Moreover we require $\rho_{0,n,R}|\vu_{0,n,R}|^2 \to \frac{|(\rho\vu)_{0,R}|^2}{\vr_{0,R}}$ in $L^1(\Omega_R)$.

We denote $\alpha = (\ep,n,R)$ to shorten the notation. The approximation problem itself consists of the following equations
\begin{equation}
\left\{
\begin{array}{llll}  \partial_{t}\rho_{\alpha}+\Div(\rho_{\al}\mathbf{u}_{\al})=\varepsilon\Delta_y\rho_{\al}, \\
\partial_t(\rho_{\al} \mathbf{u}_{\al} )+\Div(\rho_{\al} \mathbf{u}_{\al} \otimes\mathbf{u}_{\al} )+\Grad p(\rho_{\al} ) \\
\qquad =\mu\Delta_x\mathbf{u}_{\al}+\delta\partial_{zz}\vu_{\al}+(\mu+\lambda)\Grad\Div\mathbf{u}_{\al} -\varepsilon(\nabla_y\rho_{\al}\cdot \nabla_y)\mathbf{u}_{\al} + \rho_\al\vf,\label{4.1}
\end{array}\right.
\end{equation}
on $(0,T)\times\Omega_R$ coupled with boundary conditions
\begin{align}
    \left.\frac{\de \vr_\al}{\de \vn}\right|_{\de \Omega_R} &= 0, \label{eq:approxBC1} \\
    \vu_\al|_{\de \Omega_R} &= 0, \label{eq:approxBC2}
\end{align}
and initial conditions
\begin{align}
    \rho_{\alpha}(0,y) &= \rho_{0,n,R}(y), \label{eq:approxIC1} \\
    \vu_{\alpha}(0,y) &= \vu_{0,n,R}(y), \label{eq:approxIC2}
\end{align}
in $\Omega_R$.

Following \cite{e1}, we solve the approximation system \eqref{4.1}--\eqref{eq:approxIC2} step by step.

\subsection{Continuity equation}\label{ss:41}
 For a given $\vu_\al \in C([0,T]; X_n^R)$, we consider the equation:
\begin{equation}
\partial_{t}\rho_\al +\Div (\rho_\al \mathbf{u}_\al )=\varepsilon\Delta_y\rho_\al \label{approximation continuity equation}
\end{equation}
with boundary condition \eqref{eq:approxBC1} and initial condition \eqref{eq:approxIC1} bounded from above and below as in \eqref{eq:approxICprop}. As a weak solution to this problem we consider function $\vr_\al$ satisfying
\begin{equation}\label{eq:CEweak}
\int_{\Omega_R}(\rho_\al\varphi)(\tau,y) \dy - \int_{\Omega_R}\rho_{0,n,R}(y)\varphi(0,y) \dy = \int^\tau_0\int_{\Omega_R}\left(\rho_\al\partial_t\varphi + (\rho_\al\vu_\al - \ep\Grad\rho_\al)\cdot\Grad\varphi\right) \dydt,
\end{equation}
for all $\varphi\in C^1([0,T]\times\overline{\Omega_R})$ and almost all $\tau \in [0,T]$.

The existence theory for this problem is well known, see for example \cite{n}, and we summarize it in the following proposition.


\begin{proposition} \label{existence approximated densities}
Let $\varepsilon>0$, $R, n \in \mathbb{N}$ be fixed. For any given $\vu_\al \in C([0,T]; X_n^R)$, there exists a unique weak solution
\begin{equation*}
	\rho_{\al} \in L^\infty(0,T; W^{1,2}(\Omega_R)) \cap L^{2}(0,T;W^{2,2}(\Omega_R)), \quad \partial_t\rho_{\al} \in L^2(0,T;L^2(\Omega_R))
\end{equation*}
of equation \eqref{approximation continuity equation} with boundary condition \eqref{eq:approxBC1} and initial condition \eqref{eq:approxIC1} satisfying \eqref{eq:approxICprop}. Moreover,
\begin{itemize}
\item[(i)] The solution $\rho_{\al}$ satisfies
	\begin{equation} \label{bound above density}
	\| \rho_{\al} \|_{L^{\infty}((0,\tau) \times \Omega_R)} \leq \overline{\rho_n} \exp \left( \tau \|\Div \vu_{\al}\|_{L^{\infty}((0,T) \times \Omega_R)} \right),
	\end{equation}
 and
	\begin{equation} \label{bound below density}
	\inf_{(0,\tau) \times \Omega_R} \rho_{\al}(t,x)\geq \underline{\rho_n} \exp \left( -\tau \|\Div \vu_{\al}\|_{L^{\infty}((0,T) \times \Omega_R)}\right),
	\end{equation}
for any $\tau \in [0,T]$;
\item[(ii)] For $\vu_1, \vu_2  \in C([0,T]; X_n^R)$ satisfying
	\begin{equation*}
	\max_{i=1,2} \| \vu_i\|_{L^{\infty}(0,T; W^{1,\infty}(\Omega_R))} \leq C
	\end{equation*}
with some $C > 0$ let $\rho_i= \rho[\vu_i]$, $i=1,2$ be the solutions of the approximated continuity equation \eqref{approximation continuity equation} with boundary conditions \eqref{eq:approxBC1} and the same initial data \eqref{eq:approxIC1}. Then, for any $\tau \in [0,T]$ it holds
	\begin{equation} \label{difference two solution approximates densities}
	\| (\rho_1- \rho_2)(\tau, \cdot)\|_{L^2(\Omega_R)} \leq c_1 \| \vu_1 - \vu_2 \|_{L^{\infty}(0,\tau; W^{1,\infty}(\Omega_R))}
	\end{equation}
with $c_1=c_1(\varepsilon, \rho_{0,n,R}, T, C)$;
\item[(iii)] Let $\rho_{0,n,R} \in W^{1,\infty}(\Omega_R)$. Then it holds
    \begin{equation}\label{3.10}
	\varepsilon\|\Grad\rho\|^2_{L^\infty(0,T,L^2(\Omega_R))} \leq C
	\end{equation}
with $C=C(\rho_{0,n}, T)$.
\end{itemize}
\end{proposition}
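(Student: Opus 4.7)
The plan is to treat \eqref{approximation continuity equation} as a linear parabolic problem (since $\vu_\al$ is given) and apply standard theory; the regularity of $\vu_\al$ is comfortable because $\vu_\al \in C([0,T];X_n^R)$ lives in a finite-dimensional space of smooth functions, hence $\vu_\al$ and all its spatial derivatives are bounded on $[0,T]\times\overline{\Omega_R}$.

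First I would obtain existence by a Galerkin approximation in the eigenfunction basis of the Neumann Laplacian on $\Omega_R$. This produces a sequence of linear ODE systems whose a priori $L^2$ and $H^1$ estimates follow from testing with $\rho_\al^{(m)}$ and $-\Delta\rho_\al^{(m)}$ respectively; combined with the parabolic estimate $\|\partial_t \rho_\al^{(m)}\|_{L^2(L^2)}$ and an Aubin--Lions argument, I pass to the limit $m\to\infty$ to obtain a weak solution in the claimed class $L^\infty(0,T;W^{1,2})\cap L^2(0,T;W^{2,2})$ with $\partial_t\rho_\al \in L^2(0,T;L^2)$. Uniqueness is immediate by linearity: testing the difference of two solutions with the difference itself gives $\frac{d}{dt}\|\rho_\al^1-\rho_\al^2\|_{L^2}^2 + 2\ep\|\Grad(\rho_\al^1-\rho_\al^2)\|_{L^2}^2 \leq C\|\rho_\al^1-\rho_\al^2\|_{L^2}^2$ and Gr\"onwall does the rest.

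For the pointwise bounds (i), I would proceed by a comparison/maximum principle. The functions $\overline{w}(t) = \overline{\rho_n}\exp(t\|\Div\vu_\al\|_{L^\infty})$ and $\underline{w}(t)=\underline{\rho_n}\exp(-t\|\Div\vu_\al\|_{L^\infty})$ are respectively a super- and a sub-solution of the PDE satisfying the right Neumann condition trivially and dominating / being dominated by the initial datum $\rho_{0,n,R}$. A clean rigorous route is to test the equation satisfied by $(\rho_\al-\overline{w})$ with $(\rho_\al-\overline{w})_+$: after integration by parts the diffusion contributes $\ep\int|\Grad(\rho_\al-\overline w)_+|^2$, the convective term produces $\int\Div(\vu_\al)(\rho_\al-\overline w)_+^2/2$ up to lower order pieces that absorb into $\|\Div\vu_\al\|_{L^\infty}\int(\rho_\al-\overline w)_+^2$, and Gr\"onwall (together with $(\rho_{0,n,R}-\overline w(0))_+=0$) forces $(\rho_\al-\overline w)_+ \equiv 0$. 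The lower bound is analogous with $(\underline w-\rho_\al)_+$. For (ii), I subtract the equations for $\rho_1,\rho_2$, rewrite the difference of convective terms as $\Div((\rho_1-\rho_2)\vu_1) + \Div(\rho_2(\vu_1-\vu_2))$, test with $\rho_1-\rho_2$, and use the uniform bound on $\rho_2$ from (i) together with $\|\vu_i\|_{W^{1,\infty}}\leq C$ to reach $\frac{d}{dt}\|\rho_1-\rho_2\|_{L^2}^2 \leq C\|\rho_1-\rho_2\|_{L^2}^2 + C(\ep,\rho_{0,n,R},T,C)\|\vu_1-\vu_2\|^2_{W^{1,\infty}}$; Gr\"onwall then yields \eqref{difference two solution approximates densities}.

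Finally for (iii), I would test the equation with $-\Delta\rho_\al$, integrate in space, and exploit the Neumann boundary condition so that the boundary terms from integration by parts vanish. This gives an identity of the form $\frac{1}{2}\frac{d}{dt}\|\Grad\rho_\al\|_{L^2}^2 + \ep\|\Delta\rho_\al\|_{L^2}^2 = -\int\Div(\rho_\al\vu_\al)\Delta\rho_\al \dy$; the right-hand side is controlled, using the $L^\infty$ bounds on $\rho_\al$, $\vu_\al$ and $\Grad\vu_\al$ available by (i) and the finite-dimensionality of $X_n^R$, by $\frac{\ep}{2}\|\Delta\rho_\al\|_{L^2}^2 + C(\rho_{0,n,R},T)\|\Grad\rho_\al\|_{L^2}^2 + C(\rho_{0,n,R},T)$. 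Absorbing and applying Gr\"onwall gives \eqref{3.10}. The only mild subtlety I foresee is justifying the test with $-\Delta\rho_\al$ at the level of the Galerkin approximation (so that boundary terms are handled rigorously), but this is standard once one observes that the Neumann eigenfunctions form an orthogonal basis of $W^{1,2}(\Omega_R)$; apart from that bookkeeping, every ingredient is classical linear parabolic theory applied to smooth finite-dimensional data.
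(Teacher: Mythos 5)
Your proposal is correct in substance, and it is essentially the argument the paper itself relies on: the paper does not prove this proposition at all, but simply states that ``the existence theory for this problem is well known'' and cites Novotn\'y--Stra\v skraba, where exactly your route is carried out (linear parabolic theory/Galerkin for existence, uniqueness and regularity of the Neumann problem with the given $\vu_\al$; a comparison/maximum-principle argument with the spatially constant super- and sub-solutions $\overline{\rho_n}e^{\pm t\|\Div\vu_\al\|_{L^\infty}}$ for (i); an $L^2$ energy estimate for the difference, with the extra term $\Div(\rho_2(\vu_1-\vu_2))$ absorbed into the $\ep$-diffusion, for (ii); and testing with $-\Delta\rho_\al$ plus Gr\"onwall for (iii)). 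Two caveats are worth recording. First, in (iii) your Young-inequality step necessarily produces a factor $\ep^{-1}$ in front of $\|\Grad\rho_\al\|_{L^2}^2$, so the Gr\"onwall constant you obtain depends on $\ep$ and on $\|\vu_\al\|_{C([0,T];X_n^R)}$ (equivalently its $W^{1,\infty}$ bound), not merely on $(\rho_{0,n},T)$ as you (and, admittedly, the statement itself) assert; this is harmless, because at this stage $\ep$, $n$, $R$ and the fixed-point ball for $\vu_\al$ are fixed, and the $\ep$-uniform gradient information used later in the paper is the $L^2_tL^2_y$ bound coming from the energy identity, not (iii). Second, your opening claim that $X_n^R$ consists of smooth functions with bounded derivatives is not justified as stated: the basis functions are eigenfunctions of the anisotropic Lam\'e operator on a Lipschitz domain and are only guaranteed to lie in $W^{1,2}_0\cap W^{2,2}(\Omega_R)$, which in three dimensions does not embed into $W^{1,\infty}$; likewise the $L^2(0,T;W^{2,2}(\Omega_R))$ regularity of $\rho_\al$ requires Neumann elliptic regularity beyond what a general Lipschitz domain provides. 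Both points are imprecisions shared by the paper (which implicitly assumes $\|\vu_\al\|_{W^{1,\infty}}<\infty$ and the $W^{2,2}$ regularity), so they do not distinguish your argument from the intended one, but they should be acknowledged rather than attributed to finite-dimensionality alone.
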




\subsection{Momentum equation}\label{ss:42}
We consider the weak formulation of the approximated momentum equation
\begin{align}\label{eq:MEweak}
&\int_{\Omega_R}(\rho_{\al} \vu_{\al}(\tau,y) \cdot\vphi)\dy - \int_{\Omega_R}(\rho_{0,n,R} \vu_{0,n,R})(y) \cdot\vphi \dy \nonumber \\
& \quad = \int^\tau_0\int_{\Omega_R}\left(\rho_{\al} \vu_{\al} \otimes\vu_{\al} :\Grad\vphi
+p(\rho_{\al} )\Div\vphi\right)\dydt \nonumber \\
&-\int^\tau_0\int_{\Omega_R}\left(\mu\nabla_x \vu_{\al} :\nabla_x\vphi + \delta\partial_z\vu_{\al} \cdot \partial_z\vphi
+(\mu+\lambda)\Div\vu_{\al}\, \Div\vphi\right) \dydt\nonumber\\
&-\int^\tau_0\int_{\Omega_R} \varepsilon(\Grad\rho_{\al} \cdot \Grad)\vu_{\al} \cdot\vphi +\rho_\alpha\mathbf{f}\cdot\vphi \dydt
\end{align}
for all $\vphi\in X_n^R$ and any $\tau \in [0,T]$.



Now, the integral identity \eqref{eq:MEweak} can be rephrased for any $\tau\in [0,T]$ as
	\begin{equation*}
	\langle \mathcal{M}[\rho_{\al}(\tau, \cdot)] (\textbf{u}_{\al}(\tau, \cdot)), \vphi\rangle = \langle \textbf{J}_0^*, \vphi \rangle + \langle \int_{0}^{\tau} \mathcal{N} [\rho_{\al}(s, \cdot), \textbf{u}_{\al}(s,\cdot)] \ds, \vphi \rangle,
	\end{equation*}
with
	\begin{align*}
		\mathcal{M}[\rho]: X_n^R \rightarrow (X_n^R)^*, \quad &\langle \mathcal{M}[\rho] \textbf{u}, \vphi \rangle :=\int_{\Omega_R}\rho \textbf{u} \cdot \vphi \dy, \\
		\textbf{J}_0^* \in (X_n^R)^*, \quad &	\langle \textbf{J}_0^*, \vphi \rangle:= \int_{\Omega_R} \rho_{0,n,R}\vu_{0,n,R} \cdot \vphi  \dy, \\
		\mathcal{N}[\rho, \textbf{u}] \in (X_n^R)^*, \quad & \langle  \mathcal{N} [\rho, \textbf{u}], \vphi\rangle :=\int_{\Omega_R} \left[ (\rho \vu \otimes \vu) : \nabla_y\vphi +p(\rho)\Div \vphi -\mu\nabla_x\mathbf u:\nabla_x\vphi\right] \dy  \\
		& - \int_{\Omega_R}\left[\delta\partial_z\textbf{u}\cdot\partial_z\vphi + (\mu+\lambda)\Div\vu\ \Div\vphi + \varepsilon  (\nabla_y \rho \cdot \nabla_y) \vu \cdot \vphi+\rho\mathbf{f}\cdot\vphi\right] \dy.
	\end{align*}	
Then we define a mapping
\begin{align*}
\mathcal{T}:\mathbf u\in C([0,T],X_n^R)\rightarrow \mathcal{T}(\mathbf u)=\mathbf u(\rho)\in C([0,T],X_n^R),
	\end{align*}	
where
\begin{equation*}
		\mathcal{T}[\vu](\tau, \cdot):= \mathcal{M}^{-1} [\rho_{}(\tau, \cdot)] \left( \textbf{\textup{J}}_0^*+ \int_{0}^{\tau} \mathcal{N} [\rho_{}(s, \cdot), \vu(s,\cdot)] \ds \right),
		\end{equation*}
and we use the Leray-Schauder fixed point theorem to obtain $\vu_\al \in C([0,T^*];X_n^R)$ such that $\vu_\al = \mathcal{T}(\vu_\al)$ with $\rho = \rho_\al(\vu_\al)$ given by Proposition \ref{existence approximated densities}. It is then a standard matter to prolong the existence time from $T^*$ to $T$, see i.e. \cite[Proposition 7.34]{n}. Following \cite[Section 7.7.4.2]{n}, we have
\begin{equation}\label{4.17}
\begin{split}
&- \int_0^T\int_{\Omega_R}\left(\frac{1}{2}\rho_{\al}|\vu_{\al} |^2+P(\rho_{\al} )\right)\psi'(t) \dydt
+\int_0^T\int_{\Omega_R}\left(\mu|\nabla_x\vu_{\al} |^2+\delta|\partial_z\vu_{\al}|^2\right)\psi(t) \dydt \\
&\quad +\int_{\Omega_R}(\mu+\lambda)|\Div\vu_{\al} |^2\psi(t) \dydt +\ep\gamma\int_0^T\int_{\Omega_R}\rho_\al^{\gamma-2}|\Grad\rho_\al |^2\psi(t) \dydt \\
&\quad = \int_0^T\int_{\Omega_R}\rho_{\al}\mathbf{f}\cdot\vu_{\al}\psi(t) \dydt + \int_{\Omega_R}\left(\frac12 \rho_{0,n,R}|\vu_{0,n,R}|^2 + P(\rho_{0,n,R})\right)\psi(0) \dy,
\end{split}
\end{equation}
for all $\psi \in C^\infty_c([0,T))$. Since on a fixed bounded domain the total mass $\int_{\Omega_R} \rho_\alpha \dy$ is constant in time, which is an easy consequence of \eqref{approximation continuity equation} and \eqref{eq:approxBC1}-\eqref{eq:approxBC2}, we obtain
\begin{equation}\label{4.18}
- \int_0^T\int_{\Omega_R}\left(\rhoin^\gamma - \frac{\gamma}{\gamma-1}\rho_\al\rhoin^{\gamma-1}\right)\psi'(t) \dydt
=\int_{\Omega_R} \left(\rhoin^\gamma - \frac{\gamma}{\gamma-1}\rho_{0,n,R}\rhoin^{\gamma-1}\right)\psi(0) \dy.
\end{equation}
Adding \eqref{4.18} to \eqref{4.17} and using a sequence of test functions $\psi_k(t)$ approximating $1_{[0,\tau]}$ we obtain after passing to the limit
\begin{equation}\label{eq:hoho}
\begin{split}
&\int_{\Omega_R}\left(\frac{1}{2}\rho_{\al}|\vu_{\al} |^2+\mathcal{E}^{\rho_\infty}(\rho_{\al} )\right)(\tau,\cdot) \dy
+\int^\tau_0\int_{\Omega_R}\left(\mu|\nabla_x\vu_{\al} |^2+\delta|\partial_z\vu_{\al}|^2\right) \dydt \\
&\quad +\int^\tau_0\int_{\Omega_R}(\mu+\lambda)|\Div\vu_{\al} |^2 \dydt +\ep\gamma\int^\tau_0\int_{\Omega_R}\rho_\al^{\gamma-2}|\Grad\rho_\al |^2 \dydt \\
&\quad = \int_{\Omega_R}\left(\frac{1}{2}\rho_{0,n,R}|\vu_{0,n,R} |^2+\mathcal{E}^{\rhoin}(\rho_{0,n,R} )\right) \dy
+\int^\tau_0\int_{\Omega_R}\rho_{\al}\mathbf{f}\cdot\vu_{\al} \dydt
\end{split}
\end{equation}
for all $\tau \in (0,T)$.

Next, we introduce the essential and residual parts of any function $h$ related to $\rho_\al$. We denote
\begin{equation}
    [h]_{ess,\rho_\al} = h 1_{\rho_\al \in (\rhoin / 2, 2\rhoin)}, \qquad [h]_{res,\rho_\al} = h - [h]_{ess,\rho_\al}.
\end{equation}
In order to shorten notation, we will use $[h]_{ess}$ instead of $[h]_{ess,\rho_\al}$ and similarly for the residual part if no confusion arises. It is easy to verify that
\begin{equation}\label{eq:Pass}
\lim_{s\rightarrow\rho_\infty}\frac{2}{\gamma\rho^{\gamma-2}_\infty}\frac{\mathcal{E}^{\rho_\infty}(s)}{(s-\rho_\infty)^2}=1,
\hspace{8pt}\lim_{s\rightarrow\infty}(\gamma-1)\frac{\mathcal{E}^{\rho_\infty}(s)}{(s-\rho_\infty)^\gamma}=1.
\end{equation}
Then we deduce
\begin{align}\label{4.19}
&c_1\mathcal{E}^{\rho_\infty}(\rho) \leq [1]_{ess}|\rho-\rho_\infty|^2 \leq C_1\mathcal{E}^{\rho_\infty}(\rho), \\ \label{4.20}
&c_2\mathcal{E}^{\rho_\infty}(\rho)\leq [1]_{res}|\rho-\rho_\infty|^\gamma
\leq C_2\mathcal{E}^{\rho_\infty}(\rho).
\end{align}
Next, we write
\begin{equation}\label{eq:RHSsplit}
\begin{split}
&\int^\tau_0\int_{\Omega_R}\rho_{\al}\mathbf{f}\cdot\vu_{\al} \dydt = \int^\tau_0\int_{\Omega_R}([1]_{ess}(\rho_\al-\rho_\infty)+\rho_\infty)\mathbf f\cdot\mathbf u_{\al}\dydt \\
& \quad
+\int^\tau_0\int_{\Omega_R}[1]_{res}(\rho_\al-\rho_\infty)\mathbf f\cdot\mathbf u_{\al}\dydt
\end{split}
\end{equation}
We estimate the last term on the right hand side of \eqref{eq:hoho} using \eqref{eq:RHSsplit} as follows.
\begin{equation}\label{eq:RHSsplit1}
\begin{split}
&\left|\int^\tau_0\int_{\Omega_R}([1]_{ess}(\rho_\al-\rho_\infty)+\rho_\infty)\mathbf f\cdot\mathbf u_{\al}\dydt\right|\\
&\quad \leq2\rho_\infty\|\mathbf f\|_{L^2(0,T;L^{\frac{6}{5}}(\Omega_R))}\|\mathbf u_\al\|_{L^2(0,\tau;L^6(\Omega_R))}\\
&\quad \leq C(\beta)\|\mathbf f\|^2_{L^2(0,T;L^{\frac{6}{5}}(\Omega_R))}
+\frac{\beta}{4}\|\nabla_y\mathbf u_\al\|_{L^2((0,\tau)\times\Omega_R)}^2
\end{split}
\end{equation}
and
\begin{equation}\label{eq:RHSsplit2}
\begin{split}
&\left|\int^\tau_0\int_{\Omega_R}[1]_{res}(\rho_\al-\rho_\infty)\mathbf f\cdot\mathbf u_{\al}\dydt \right| \\
&\quad \leq\int^\tau_0\int_{\Omega_R}[1]_{res}(\sqrt{\rho_\al}+\sqrt{\rho_\infty})\sqrt{|\rho_\al-\rho_\infty|}|\mathbf f||\mathbf u_{\al}|\dydt\\
&\quad \leq\int^\tau_0\|\sqrt{\rho_\al}\mathbf u_\al\|_{L^{2}(\Omega_R)}
\|[1]_{res}\sqrt{|\rho_\al-\rho_\infty|}\|_{L^{2\gamma}(\Omega_R)}\|\mathbf f\|_{L^{\frac{2\gamma}{\gamma-1}}(\Omega_R)} \dt\\
&\quad +\sqrt{\rho_\infty}\int_0^\tau\|[1]_{res}\sqrt{|\rho_\al-\rho_\infty|}\|_{L^{2\gamma}(\Omega_R)}
\|\mathbf u_\al\|_{L^{6}(\Omega_R)}\|\mathbf f\|_{L^{\frac{6\gamma}{5\gamma-3}}(\Omega_R)} \dt \\
&\quad \leq \int^\tau_0\int_{\Omega_R}\left(\frac{1}{2}\rho_\al|\mathbf u_\al|^2 + \mathcal{E}^{\rho_\infty}(\rho_\al)\right) \dydt +\frac{\beta}{4}\|\nabla_y\mathbf u_\al\|_{L^2((0,\tau)\times\Omega_R)}^2 \\
&\quad + C\|\vf\|_{L^{\frac{2\gamma}{\gamma-1}}((0,T)\times \Omega_R)}^{\frac{2\gamma}{\gamma-1}}
 + C(\beta)\|\vf\|_{L^{\frac{2\gamma}{\gamma-1}}(0,T;L^{\frac{6\gamma}{5\gamma-3}}(\Omega_R))}^{\frac{2\gamma}{\gamma-1}},
\end{split}
\end{equation}
where we recall that the constant $\beta$ is defined in \eqref{eq:C0def} and satisfies Corollary \ref{co:beta}. We conclude the uniform estimates by means of the Gronwall lemma and recover
\begin{equation}\label{eq:hoho_final}
\begin{split}
&\int_{\Omega_R}\left(\frac{1}{2}\rho_{\al}|\vu_{\al} |^2+\mathcal{E}^{\rho_\infty}(\rho_{\al} )\right)(\tau,\cdot) \dy
+\int^\tau_0\int_{\Omega_R}\left(\mu|\nabla_x\vu_{\al} |^2+\delta|\partial_z\vu_{\al}|^2\right) \dydt \\
&\quad +\int^\tau_0\int_{\Omega_R}(\mu+\lambda)|\Div\vu_{\al} |^2 \dydt +\ep\gamma\int^\tau_0\int_{\Omega_R}\rho_\al^{\gamma-2}|\Grad\rho_\al |^2 \dydt \leq C(\rho_{0,n,R},\vu_{0,n,R},\vf)
\end{split}
\end{equation}
for all $\tau \in (0,T)$.

We summarize the obtained properties in the following Proposition.
\begin{proposition}\label{prop:sol_ep_n_R}
Let $n,R \in \N$ and $\varepsilon > 0$. Let $\vr_{0,n} \in C^1(\overline{\Omega})$ satisfy \eqref{eq:approxICprop} and $\mathbf u_{0,n,R}\in X_n^R$. Then, there exists a solution $\rho_\al$ and $\vu_{\al}$ to \eqref{4.1}--\eqref{eq:approxIC2} on time interval $(0,T)$ such that
\begin{align*}
&\rho_{\al}\in L^2(0,T;W^{1,2}(\Omega_R))\cap L^2(0,T;W^{2,2}(\Omega_R)),\\
&\partial_t\rho_{\al}\in L^2(0,T;L^2(\Omega_R)), \quad \varepsilon|\nabla\rho_{\al}|^2\in L^1(0,T;L^1(\Omega_R)), \\
&\vu_\al\in L^\infty(0,T;X_n^R), \quad \partial_t\vu_\al\in L^2(0,T;X_n^R).
\end{align*}
Moreover, it holds
\begin{equation*}
0 < \underline{\rho_n}\exp \left( -T \|\Div \vu_{\al}\|_{L^{\infty}((0,T) \times \Omega_R)} \right)  \leq \rho_\al(t,y) \leq \overline{\rho_n}\exp \left( T \|\Div \vu_{\al}\|_{L^{\infty}((0,T) \times \Omega_R)} \right),
\end{equation*}
for all $(t,y) \in [0,T]\times \overline{\Omega_R}$ and
\begin{equation}\label{eq:EIvar1}
\begin{split}
&\int_{\Omega_R}\left(\frac{1}{2}\rho_{\al}|\vu_{\al} |^2+\mathcal{E}^{\rho_\infty}(\rho_{\al} )\right)(\tau,\cdot) \dy
+\int^\tau_0\int_{\Omega_R}\left(\mu|\nabla_x\vu_{\al} |^2+\delta|\partial_z\vu_{\al}|^2\right) \dydt \\
&\quad +\int^\tau_0\int_{\Omega_R}(\mu+\lambda)|\Div\vu_{\al} |^2 \dydt +\ep\gamma\int^\tau_0\int_{\Omega_R}\rho_\al^{\gamma-2}|\Grad\rho_\al |^2 \dydt \\
&\quad \leq \int_{\Omega_R}\left(\frac{1}{2}\rho_{0,n,R}|\vu_{0,n,R} |^2+\mathcal{E}^{\rhoin}(\rho_{0,n,R} )\right) \dy
+\int^\tau_0\int_{\Omega_R}\rho_{\al}\mathbf{f}\cdot\vu_{\al} \dydt
\end{split}
\end{equation}
for all $\tau \in (0,T)$.
\end{proposition}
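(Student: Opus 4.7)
The plan is to follow the Galerkin plus artificial diffusion scheme already sketched in Subsections \ref{ss:41} and \ref{ss:42}. First, for a given $\vu \in C([0,T^*];X_n^R)$ with $T^*\le T$, I would apply Proposition \ref{existence approximated densities} to produce the associated density $\rho[\vu]$, together with the pointwise bounds \eqref{bound above density}--\eqref{bound below density} and the Lipschitz dependence \eqref{difference two solution approximates densities}. In particular, the positive lower bound on $\rho[\vu]$ ensures that the operator $\mathcal{M}[\rho[\vu](\tau,\cdot)]$ is invertible on the finite-dimensional space $X_n^R$, so that the solution map $\mathcal{T}$ introduced in Subsection \ref{ss:42} is well posed.

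Next I would verify that $\mathcal{T}$ is continuous and compact on a suitable closed ball of $C([0,T^*];X_n^R)$. This reduces to elementary estimates since all norms on $X_n^R$ are equivalent, so the only non-trivial point is equicontinuity in time, which follows from the integral form of $\mathcal{T}$ and the regularity of $\rho[\vu]$ from Proposition \ref{existence approximated densities}. A standard application of the Leray-Schauder fixed point theorem then yields a local-in-time Galerkin solution $\vu_\al = \mathcal{T}(\vu_\al)$, $\rho_\al = \rho[\vu_\al]$ on $[0,T^*]$. Its prolongation from $T^*$ to $T$ follows by the blow-up alternative once uniform a priori estimates are at hand, see \cite[Proposition 7.34]{n}.

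To derive those estimates, and in particular the energy inequality \eqref{eq:EIvar1}, I would test the Galerkin momentum identity with $\vu_\al$ (a legal choice since $\vu_\al(t,\cdot)\in X_n^R$) and combine with the continuity equation \eqref{approximation continuity equation} tested by $\tfrac12|\vu_\al|^2$ and by $P'(\rho_\al)$. The artificial diffusion term $-\varepsilon(\Grad\rho_\al\cdot\Grad)\vu_\al$ appearing in \eqref{4.1} cancels the contribution produced by $\varepsilon\Delta_y\rho_\al$ in \eqref{approximation continuity equation}, leaving the non-negative residue $\varepsilon\gamma\int\rho_\al^{\gamma-2}|\Grad\rho_\al|^2$ on the left-hand side and yielding \eqref{4.17}. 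Adding the mass conservation identity \eqref{4.18}, letting the cutoff $\psi_k \to 1_{[0,\tau]}$, splitting the force term into essential and residual parts as in \eqref{eq:RHSsplit1}--\eqref{eq:RHSsplit2}, absorbing the gradient pieces via Corollary \ref{co:beta} and closing by the Gronwall lemma recovers \eqref{eq:EIvar1}.

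The main obstacle I anticipate is the delicate bookkeeping in the energy computation, in particular the identification of the cancellation between the artificial diffusion terms in the two equations, which is what gives rise to the non-negative dissipative contribution proportional to $\rho_\al^{\gamma-2}|\Grad\rho_\al|^2$. The non-symmetric anisotropic structure of the stress tensor does not affect this step, since the associated quadratic form $\mu|\nabla_x\vu_\al|^2 + \delta|\partial_z\vu_\al|^2 + (\mu+\lambda)|\Div\vu_\al|^2$ is pointwise non-negative and, by Corollary \ref{co:beta}, controls $\beta|\Grad\vu_\al|^2$, which is precisely what is needed to close the Gronwall argument leading to \eqref{eq:hoho_final}.
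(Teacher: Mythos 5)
Your proposal is correct and follows essentially the same route as the paper: Proposition \ref{existence approximated densities} for the regularized continuity equation, the fixed-point formulation via $\mathcal{M}$, $\mathcal{N}$ and Leray--Schauder with prolongation to $[0,T]$ as in \cite[Proposition 7.34]{n}, and the energy identity obtained by testing with $\vu_\al$ and $P'(\rho_\al)$ (cf. \cite[Section 7.7.4.2]{n}), followed by adding the mass-conservation identity \eqref{4.18} and passing $\psi_k \to 1_{[0,\tau]}$; the only cosmetic remark is that the dissipative residue $\ep\gamma\int\rho_\al^{\gamma-2}|\Grad\rho_\al|^2$ arises from the $P'(\rho_\al)$ test of $\ep\Delta_y\rho_\al$ rather than from the kinetic-energy cancellation, and the Gronwall/force-splitting step is only needed afterwards for the uniform bound \eqref{eq:hoho_final}, not for \eqref{eq:EIvar1} itself.
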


\subsection{The limit $\varepsilon\rightarrow0$}\label{ss:43}
In this section, we keep fixed $n,R \in \N$ and let $\varepsilon\rightarrow0$. In order to emphasize the dependence of the sequence of solutions on $\ep$, we denote the functions constructed in Proposition \ref{prop:sol_ep_n_R} as $\rho_{\ep}$, $\vu_\ep$.

Since $n$ remains fixed and all norms are equivalent in the finitely dimensional spaces, we have
\begin{equation*}
\|\vu_{\varepsilon} \|_{L^\infty(0,T;W^{1,\infty}(\Omega_R))}\leq C,
\end{equation*}
and consequently
\begin{equation*}
0<\underline{\rho}\leq\rho_{\varepsilon} (t,y)\leq\overline{\rho}
\end{equation*}
for some $0 < \underline{\rho} < \overline{\rho}$ independent of $\varepsilon$. This implies using \eqref{eq:EIvar1} that
\begin{align*}
\varepsilon\|\nabla\rho_{\varepsilon} \|^2_{L^2(0,T;L^{2}(\Omega_R))}\leq C.
\end{align*}
From the continuity equation \eqref{approximation continuity equation} we deduce
\begin{equation*}
\begin{split}
&\rho_{\varepsilon} \rightarrow\rho \quad \text{weakly}^* \text{ in } L^\infty((0,T)\times\Omega_R), \\
&\rho_{\varepsilon} \rightarrow\rho \quad \text{ in }  C_{weak}([0,T];L^r(\Omega_R)) \text{ for any } 1<r<\infty, \end{split}
\end{equation*}
and the limit density still satisfies
\begin{equation*}
0<\underline{\rho}\leq\rho (t,y)\leq\overline{\rho}.
\end{equation*}
Similarly,
\begin{align*}
&\vu_{\varepsilon} \rightarrow\vu \quad \text{weakly}^* \text{ in } L^\infty(0,T;W^{1,\infty}(\Omega_R)),\\
&\rho_{\varepsilon} \vu_{\varepsilon} \rightarrow\mathbf m \quad \text{weakly}^*\text{ in } L^\infty((0,T)\times\Omega_R), \\
&\rho_{\varepsilon} \vu_{\varepsilon} \rightarrow\mathbf m \quad \text{ in } C_{weak}([0,T];L^r(\Omega_R)) \text{ for any } 1<r<\infty.
\end{align*}
Finally, we use the Arzel\'a-Ascoli theorem to identify
\begin{align*}
\mathbf m=\rho\vu \quad \text{ a.a in } (0,T)\times\Omega_R.
\end{align*}
Now we are ready to pass to the limit in the equations of motion. It is straightforward to pass to the limit in the continuity equation \eqref{eq:CEweak} to obtain
\begin{equation}\label{eq:CEweak_lim1}
\int_{\Omega_R}(\rho\varphi)(\tau,y) \dy - \int_{\Omega_R}\rho_{0,n,R}(y)\varphi(0,y) \dy = \int^\tau_0\int_{\Omega_R}\left(\rho\partial_t\varphi + \rho\vu\cdot\Grad\varphi\right) \dydt,
\end{equation}
for all $\varphi\in C^1([0,T]\times\overline{\Omega_R})$ and almost all $\tau \in [0,T]$.

Next, we pass to the limit in the momentum equation \eqref{eq:MEweak}. The energy inequality \eqref{eq:EIvar1} together with Lemma \ref{l:elip} implies
\begin{equation*}
\|\nabla_y\vu_{\varepsilon} \|_{L^2((0,T)\times \Omega_R)}\leq C,
\end{equation*}
and therefore
\begin{align*}
&\nabla_x\vu_{\varepsilon} \rightarrow\nabla_x\vu \quad  \text{ weakly in } L^2((0,T)\times \Omega_R),\\
&\partial_z\vu_{\varepsilon}\rightarrow \partial_z\vu \quad \text{ weakly in } L^2((0,T)\times \Omega_R), \\
&\Div\vu_{\varepsilon}\rightarrow \Div\vu \quad \text{ weakly in } L^2((0,T)\times \Omega_R).
\end{align*}
Denoting by $\Pi_n: L^2(\Omega_R) \to X_n^R$ the orthogonal projection, we deduce from the momentum equation \eqref{eq:MEweak}
that $\partial_t\Pi_n[\rho_{\varepsilon} \vu_{\varepsilon}]$ is bounded in $L^2(0,T;X_n^R)$. This allows us to conclude that
\begin{equation*}
    \rho_\ep \vu_\ep \otimes \vu_\ep \to \rho \vu \otimes \vu \quad \text{ weakly}^* \text{ in } L^\infty((0,T)\times \Omega_R).
\end{equation*}
%
%
By virtue of Proposition 4.2, we get $\|p(\rho_{\varepsilon})\|_{L^\infty((0,T)\times\Omega_R)}$ is bounded uniformly in $\varepsilon$. Thus,
\begin{equation}\label{eq:press1}
p(\rho_{\varepsilon})\rightarrow\overline{p(\rho)} \quad  \text{weakly}^* \text{ in } L^\infty((0,T)\times \Omega_R).
\end{equation}
Since $p(\rho)$ is continuous and convex, and $\rho_{\varepsilon}\rightarrow \rho$ weakly in $L^1((0,T)\times\Omega_R)$, we get
\begin{equation}\label{eq:press2}
p(\rho)\leq\overline{p(\rho)} \quad \text{ a.e in } (0,T)\times\Omega_R.
\end{equation}
We define the Reynolds defect measure at this approximation level as
\begin{equation*}
\mathfrak{R}(t) = (\overline{p(\rho)} - p(\rho))\mathbb{I}.
\end{equation*}
Passing to the limit in \eqref{eq:MEweak} while moving to test functions depending also on time we end up with
\begin{equation}\label{eq:MEweak_lim1}
\begin{split}
&\int_{\Omega_R}(\rho \vu \cdot\vphi)(\tau,y)\dy - \int_{\Omega_R}(\rho_{0,n,R} \vu_{0,n,R})(y) \cdot\vphi(0,y) \dy \\
& \quad = \int^\tau_0\int_{\Omega_R}\left(\rho \vu\cdot \partial_t\vphi + \rho \vu \otimes\vu :\Grad\vphi
+p(\rho)\Div\vphi\right)\dydt + \int^\tau_0\int_{\Omega_R} \Grad \vphi : \d\mathfrak{R}(t)\dt   \\
&-\int^\tau_0\int_{\Omega_R}\left(\mu\nabla_x \vu :\nabla_x\vphi + \delta\partial_z\vu \cdot \partial_z\vphi
+(\mu+\lambda)\Div\vu\, \Div\vphi\right) \dydt
\end{split}
\end{equation}
for all $\vphi\in C^1([0,T];X_n^R)$ and almost all $\tau \in [0,T]$.

We continue with the limit process in the energy inequality \eqref{eq:EIvar1}. First, the same way as in \eqref{eq:press1}-\eqref{eq:press2} we get also
\begin{equation}\label{eq:Press1}
\mathcal{E}^{\rho_\infty}(\rho_{\varepsilon})\rightarrow\overline{\mathcal{E}^{\rho_\infty}(\rho)} \quad  \text{weakly}^* \text{ in } L^\infty((0,T)\times \Omega_R),
\end{equation}
and
\begin{equation}\label{eq:Press2}
\mathcal{E}^{\rho_\infty}(\rho)\leq\overline{\mathcal{E}^{\rho_\infty}(\rho)} \quad \text{ a.e in } (0,T)\times\Omega_R.
\end{equation}
Defining the energy defect measure at this approximation level as
\begin{equation*}
\mathfrak{E}(t) = (\overline{\mathcal{E}^{\rho_\infty}(\rho)} - \mathcal{E}^{\rho_\infty}(\rho))\mathbb{I},
\end{equation*}
using weak lower semicontinuity of convex functions (see Lemma \ref{l:convex}) and discarding the nonnegative term containing $\Grad\rho_\ep$, we pass to the limit in \eqref{eq:EIvar1} and obtain
\begin{equation}\label{eq:EIvar2}
\begin{split}
&\int_{\Omega_R}\left(\frac{1}{2}\rho|\vu |^2+ \mathcal{E}^{\rho_\infty}(\rho )\right)(\tau,\cdot) \dy + \int_{\Omega_R} \d\mathfrak{E}(\tau) \\
&\quad +\int^\tau_0\int_{\Omega_R}\left(\mu|\nabla_x\vu |^2+\delta|\partial_z\vu|^2+ (\mu+\lambda)|\Div\vu |^2\right) \dydt  \\
&\quad\leq \int_{\Omega_R}\left(\frac{1}{2}\rho_{0,n,R}|\vu_{0,n,R} |^2+\mathcal{E}^{\rho_\infty}(\rho_{0,n,R} )\right) \dy + \int_0^\tau\int_{\Omega_R} \rho\vf\cdot\vu \dydt
\end{split}
\end{equation}
for almost all $\tau \in (0,T)$. Note that \eqref{eq:pressurelaw} together with \eqref{eq:PP} implies
\begin{equation}\label{eq:RErel}
    \tr [\mathfrak{R}] = 3(\gamma-1) \mathfrak{E}.
\end{equation}

In what follows, we will pass with $n \to \infty$. Therefore we denote the limits obtained in this section as $\rho_n, \vu_n, \mathfrak{R}_n$ and $\mathfrak{E}_n$ and we rewrite the limit equations \eqref{eq:CEweak_lim1}, \eqref{eq:MEweak_lim1} and \eqref{eq:EIvar2} as
\begin{equation}\label{eq:CEweak_lim11}
\int_{\Omega_R}(\rho_n\varphi)(\tau,y) \dy - \int_{\Omega_R}\rho_{0,n,R}(y)\varphi(0,y) \dy = \int^\tau_0\int_{\Omega_R}\left(\rho_n\partial_t\varphi + \rho_n\vu_n\cdot\Grad\varphi\right) \dydt,
\end{equation}
for all $\varphi\in C^1([0,T]\times\overline{\Omega_R})$ and almost all $\tau \in [0,T]$.
\begin{equation}\label{eq:MEweak_lim11}
\begin{split}
&\int_{\Omega_R}(\rho_n \vu_n \cdot\vphi)(\tau,y)\dy - \int_{\Omega_R}(\rho_{0,n,R} \vu_{0,n,R})(y) \cdot\vphi(0,y) \dy  = \int^\tau_0\int_{\Omega_R} \Grad \vphi : \d\mathfrak{R}_n(t)\dt\\
& \quad + \int^\tau_0\int_{\Omega_R}\left(\rho_n \vu_n\cdot \partial_t\vphi + \rho_n \vu_n \otimes\vu_n :\Grad\vphi
+p(\rho_n)\Div\vphi\right)\dydt    \\
&\quad -\int^\tau_0\int_{\Omega_R}\left(\mu\nabla_x \vu_n :\nabla_x\vphi + \delta\partial_z\vu_n \cdot \partial_z\vphi
+(\mu+\lambda)\Div\vu_n\, \Div\vphi+\rho_n\vf\cdot\vphi\right) \dydt
\end{split}
\end{equation}
for all $\vphi\in C^1([0,T];X_n^R)$ and almost all $\tau \in [0,T]$.
\begin{equation}\label{eq:EIvar21}
\begin{split}
&\int_{\Omega_R}\left(\frac{1}{2}\rho_n|\vu_n |^2+ \mathcal{E}^{\rho_\infty}(\rho_n )\right)(\tau,\cdot) \dy + \int_{\Omega_R} \d\mathfrak{E}_n(\tau) \\
&\quad +\int^\tau_0\int_{\Omega_R}\left(\mu|\nabla_x\vu_n |^2+\delta|\partial_z\vu_n|^2 +(\mu+\lambda)|\Div\vu_n |^2\right) \dydt
\\
&\quad \leq \int_{\Omega_R}\left(\frac{1}{2}\rho_{0,n,R}|\vu_{0,n,R} |^2+\mathcal{E}^{\rho_\infty}(\rho_{0,n,R} )\right) \dy + \int_0^\tau\int_{\Omega_R} \rho_n\vf\cdot\vu_n \dydt
\end{split}
\end{equation}
for almost all $\tau \in (0,T)$.

\subsection{The limit $n\rightarrow\infty$}\label{s:limit_n}

Here, we pass with $n \to \infty$ while keeping the problem restricted to a bounded domain, i.e. keeping $R$ fixed. We start with deducing the a priori estimates from \eqref{eq:EIvar21}. We repeat the arguments leading to \eqref{eq:hoho_final} using the essential and residual parts $[h]_{ess,\rho_n}$ and $[h]_{res,\rho_n}$ in the estimate of the last term on the right hand side of \eqref{eq:EIvar21}. Moreover, we recall that we assumed $\rho_{0,n,R}|\vu_{0,n,R}|^2 \to \frac{|(\rho\vu)_{0,R}|^2}{\vr_{0,R}}$ in $L^1(\Omega_R)$ and $\rho_{0,n,R} \to \rho_{0,R}$ in $L^\gamma(\Omega)$, so the terms containing initial data on the right hand side of \eqref{eq:EIvar21} are also bounded. As the domain $\Omega_R$ remains bounded, we read the bounds
\begin{align*}
    &\|\rho_n\|_{L^\infty(0,T;L^\gamma(\Omega_R))} \leq C, \\
    &\|\sqrt{\rho_n} \vu_n\|_{L^\infty(0,T;L^2(\Omega_R))} \leq C, \\
    &\|\vu_n\|_{L^2(0,T;W^{1,2}_0(\Omega_R))} \leq C, \\
    &\|\mathfrak{E}_n\|_{L^\infty(0,T;\mathcal{M}(\overline{\Omega_R}))} \leq C.
\end{align*}
Therefore, passing to subsequences, we obtain
\begin{align*}
    &\rho_n \to \rho \quad \text{ in } C_{weak}([0,T];L^\gamma(\Omega_R)), \\
    &\vu_n \to \vu \quad \text{ weakly in } L^2(0,T;W^{1,2}_0(\Omega_R)) ,\\
    & \rho_n \vu_n \to \mathbf{m} \quad \text{ in } C_{weak}([0,T];L^{\frac{2\gamma}{\gamma+1}}(\Omega_R)), \\
    &\mathfrak{E}_n \to \mathfrak{E}_1 \quad \text{ weakly}^* \text{ in } L^\infty(0,T;\mathcal{M}(\overline{\Omega_R})), \\
    &\mathfrak{R}_n \to \mathfrak{R}_1 \quad \text{ weakly}^* \text{ in } L^\infty(0,T;\mathcal{M}_{3\times 3}(\overline{\Omega_R})).
\end{align*}

In order to identify $\mathbf{m} = \rho \vu$, we need the following result, which is based on the Div-Curl Lemma, see \cite[Lemma 8.1]{e5}).
\begin{lemma}\label{l:rhou_iden}
    Let $Q_R = (0,T)\times\Omega_R$. Suppose that for some $p,q,r > 1$ it holds
    \begin{align*}
        &r_n \to r \quad \text{ weakly in } L^p(Q_R), \\
        &v_n \to v \quad \text{ weakly in } L^q(Q_R), \\
        &r_n v_n \to w \quad \text{ weakly in } L^r(Q_R).
    \end{align*}
    Moreover let
    \begin{equation*}
        \de_t r_n = \Div \mathbf{g}_n 
        \quad \text{ in } \mathcal{D}'(Q_R), \quad \|\mathbf{g}_n\|_{L^s(Q_R)} \leq C
    \end{equation*}
    for some $s > 1$.
    Finally let
    \begin{equation*}
        \| \Grad v_n \|_{\mathcal{M}(\overline{Q_R})} \leq C \quad \text{ uniformly for } n \to \infty.
    \end{equation*}
    Then $w = rv$ a.e. in $Q_R$.
\end{lemma}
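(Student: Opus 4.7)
\textbf{Proof plan for Lemma \ref{l:rhou_iden}.} The strategy is to apply the classical Murat--Tartar Div--Curl lemma in the spacetime domain $Q_R \subset \R^{1+3}$. The time-derivative hypothesis on $r_n$ encodes a divergence-free structure, while the bound on $\Grad v_n$ in $\M(\overline{Q_R})$ supplies the negative Sobolev compactness required for the curl.

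First, I introduce the spacetime vector field $U_n := (r_n, -\mathbf{g}_n) : Q_R \to \R^{1+3}$. The hypothesis $\de_t r_n = \Div \mathbf{g}_n$ is exactly the statement $\mathrm{div}_{t,y} U_n = \de_t r_n - \Div \mathbf{g}_n = 0$ in $\mathcal{D}'(Q_R)$, so $\{\mathrm{div}_{t,y} U_n\}$ is trivially precompact in $W^{-1,\alpha}_{\mathrm{loc}}(Q_R)$ for any $\alpha > 1$. Moreover, along a subsequence, $U_n \to U := (r, -\mathbf{g})$ weakly in $L^{\min(p,s)}(Q_R)$, with $\mathbf{g}$ a weak $L^s$-limit of $\mathbf{g}_n$.

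Next, I define $V_n := (v_n, 0, 0, 0)$. The antisymmetric Jacobian $\nabla_{t,y} V_n - (\nabla_{t,y} V_n)^T$ has non-trivial entries only in the top row/column, and these coincide with $\pm\de_{y_i} v_n$ for $i=1,2,3$. By hypothesis these are bounded in $\M(\overline{Q_R})$, and via the compact embedding $\M(\overline{Q_R}) \hookrightarrow W^{-1,\alpha}(Q_R)$ (valid for $1 < \alpha < 4/3$ by a Rellich-type argument on a bounded subset of $\R^4$), the spacetime curl of $V_n$ is precompact in $W^{-1,\alpha}_{\mathrm{loc}}$. With the divergence of $U_n$ vanishing and the curl of $V_n$ precompact in a negative Sobolev norm, the Div--Curl lemma yields
\begin{equation*}
U_n \cdot V_n \longrightarrow U \cdot V \quad \text{in } \mathcal{D}'(Q_R).
\end{equation*}
Expanding, $U_n \cdot V_n = r_n v_n$ and $U \cdot V = rv$, so $r_n v_n \to rv$ distributionally. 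Combined with the hypothesis that $r_n v_n \to w$ weakly in $L^r(Q_R)$, uniqueness of limits in $\mathcal{D}'(Q_R)$ forces $w = rv$ a.e. in $Q_R$.

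The main technical obstacle is the bookkeeping of exponents: the Div--Curl lemma requires $p$, $q$, $s$, and $\alpha$ to satisfy a duality condition ensuring that the scalar product pairing is meaningful at the level of distributions. Since $r_n v_n$ is already assumed bounded in $L^r$ with $r > 1$, the a priori integrability of the product is not an issue; one then uses the freedom to choose $\alpha$ close to $1$, together with a H\"older interpolation between the $L^p$ and $L^q$ bounds, to land in a regime in which the standard form of the Div--Curl lemma (e.g.\ in the version of Murat--Tartar adapted to measure-valued curls) applies verbatim. Once the compatibility is verified the conclusion is immediate.
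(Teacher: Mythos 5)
Your spacetime reformulation is the right structural picture: setting $U_n=(r_n,-\mathbf{g}_n)$, which is divergence free in $(t,y)$ by the hypothesis $\de_t r_n=\Div\mathbf{g}_n$, and $V_n=(v_n,0,0,0)$, whose spacetime curl consists only of the entries $\pm\de_{y_i}v_n$ controlled in $\mathcal{M}(\overline{Q_R})$, is exactly how this statement is viewed as a Div--Curl-type result; note that the paper itself does not prove the lemma but cites \cite[Lemma 8.1]{e5} for it. Your compactness claim $\mathcal{M}(\overline{Q_R})\hookrightarrow\hookrightarrow W^{-1,\alpha}(Q_R)$ for $1<\alpha<4/3$ (dimension four) is also fine.

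The genuine gap is the final step, where you assert that the standard Murat--Tartar (or $L^p$-version) Div--Curl lemma applies ``verbatim'' after choosing $\alpha$ close to $1$ and interpolating. The classical lemma requires the two fields to be bounded in Lebesgue spaces with dual-compatible exponents, i.e. $1/a+1/b\le 1$ (in the usual $L^p$-version $1/a+1/b=1/c<1$, so that the full product is weakly compact in $L^c$). This condition is not among the hypotheses of the present lemma, and in the paper's application it actually fails: there $p=\gamma$, $q=2$, $s=r=\frac{2\gamma}{\gamma+1}$, so $U_n$ is only bounded in $L^{2\gamma/(\gamma+1)}(Q_R)$ and $\frac{1}{\min(p,s)}+\frac1q=\frac{\gamma+1}{2\gamma}+\frac12>1$ for every $\gamma>1$ (already $\frac1p+\frac1q>1$ whenever $\gamma<2$). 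The assumed weak convergence of the single product $r_nv_n$ in some $L^r$, $r>1$, is not a hypothesis the classical lemma knows how to use: its proof (Helmholtz/potential decomposition or Fourier localization) needs the duality of exponents to bound the cross pairings, e.g. $U_n$ tested against the compactly convergent potential part of $V_n$, and these pairings are not even bounded when $1/a+1/b>1$. Choosing $\alpha$ near $1$ only affects the space in which the curl is precompact, and H\"older interpolation between the $L^p$ bound on $r_n$ and the $L^q$ bound on $v_n$ produces no new information about their product. Replacing the duality condition by the equi-integrability of $r_nv_n$ combined with the measure bound on $\Grad v_n$ (via spatial mollification and the time equicontinuity coming from $\de_t r_n=\Div\mathbf{g}_n$) is precisely the nontrivial content of the refined statement \cite[Lemma 8.1]{e5} that the paper invokes; as written, your argument either proves the lemma only under the additional (and here unavailable) assumption $\frac{1}{\min(p,s)}+\frac1q<1$, or implicitly assumes the refined lemma it is supposed to establish.
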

Applying this lemma to $r_n = \rho_n$, $v_n = [\vu_n]_i$, $\mathbf{g}_n = -\rho_n \vu_n$, $p = \gamma$, $q = 2$, $r = s = \frac{2\gamma}{\gamma+1}$ yields the desired claim $\mathbf{m} = \rho\vu$ a.e. in $(0,T)\times \Omega_R$.

This is all we need to pass to the limit in the continuity equation \eqref{eq:CEweak_lim11} and recover
\begin{equation}\label{eq:CEweak_lim2}
\int_{\Omega_R}(\rho\varphi)(\tau,y) \dy - \int_{\Omega_R}\rho_{0,R}(y)\varphi(0,y) \dy = \int^\tau_0\int_{\Omega_R}\left(\rho\partial_t\varphi + \rho\vu\cdot\Grad\varphi\right) \dydt,
\end{equation}
for all $\varphi\in C^1([0,T]\times\overline{\Omega_R})$ and almost all $\tau \in [0,T]$.

For any $0\neq\xi\in\mathbb{R}^3$ we introduce the following convex lower semicontinuous functions
	\begin{equation*}
		E_\xi[r,\mathbf m] =\begin{cases}
		\frac{|\mathbf m\cdot\xi|^2}{r} &\mbox{if}\hspace{3pt} r>0, \\
		0 &\mbox{if } \mathbf m=0,\\
        \infty & \mbox{otherwise},
		\end{cases}
	\end{equation*}
together with function
\begin{equation*}
		E[r,\mathbf m] =\begin{cases}
		\frac{\mathbf m\otimes\mathbf m}{r} &\mbox{if}\hspace{3pt} r>0, \\
		0 &\mbox{if } \mathbf m=0,\\
        \infty & \mbox{otherwise}.
		\end{cases}
		\end{equation*}
 and
 \begin{equation*}
 	E_0[r,\vm]=E_{e_1}[r,\vm]+E_{e_2}[r,\vm]+E_{e_3}[r,\vm]=\begin{cases}
		\frac{|\mathbf m|^2}{r} &\mbox{if}\hspace{3pt} r>0, \\
		0 &\mbox{if } \mathbf m=0,\\
        \infty & \mbox{otherwise}.
		\end{cases}
\end{equation*}

Now we can write
\begin{equation*}
    \rho_n\vu_n \otimes \vu_n = \frac{\vm_n \otimes \vm_n}{\rho_n} = E[\rho_n, \vm_n]\to \overline{E[\vr,\vm]} \quad \text{ weakly}^* \text{ in } L^\infty(0,T;\mathcal{M}_{3\times 3}(\overline{\Omega_R})),
\end{equation*}
and
\begin{equation*}
    p(\rho_n) \to \overline{p(\rho)} \quad \text{ weakly}^* \text{ in } L^\infty(0,T;\mathcal{M}(\overline{\Omega_R})).
\end{equation*}
We observe that
\begin{equation*}
    \mathfrak{R}_2 = \overline{E[\vr,\vm]} - E[\vr,\vm] \geq 0,
\end{equation*}
meaning that the left hand side is a positive semidefinite matrix. Indeed, observe that
\begin{equation*}
    \mathfrak{R}_2 : (\xi\otimes\xi) = \overline{E_\xi[\rho,\vm]} - E_\xi[\rho,\vm] \geq 0
\end{equation*}
as $E_\xi$ is a convex lower semicontinuous function. Denoting also
\begin{equation*}
    \mathfrak{R}_3 = (\overline{p(\rho)} - p(\rho))\mathbb{I} \geq 0
\end{equation*}
and
\begin{equation*}
    \mathfrak{R} = \mathfrak{R}_1 + \mathfrak{R}_2 + \mathfrak{R}_3 \geq 0,
\end{equation*}
we may pass to the limit in the momentum equation \eqref{eq:MEweak_lim11} and recover
\begin{equation}\label{eq:MEweak_lim2}
\begin{split}
&\int_{\Omega_R}(\rho \vu \cdot\vphi)(\tau,y)\dy - \int_{\Omega_R}(\rho \vu)_{0,R}(y) \cdot\vphi(0,y) \dy  = \int^\tau_0\int_{\Omega_R} \Grad \vphi : \d\mathfrak{R}(t)\dt\\
& \quad + \int^\tau_0\int_{\Omega_R}\left(\rho \vu\cdot \partial_t\vphi + \rho \vu \otimes\vu :\Grad\vphi
+p(\rho)\Div\vphi\right)\dydt    \\
&\quad -\int^\tau_0\int_{\Omega_R}\left(\mu\nabla_x \vu :\nabla_x\vphi + \delta\partial_z\vu \cdot \partial_z\vphi
+(\mu+\lambda)\Div\vu\, \Div\vphi+\rho \vf\cdot\vphi\right) \dydt
\end{split}
\end{equation}
for all $\vphi\in C^1([0,T];X_n^R)$ with arbitrary $n \in \mathbb{N}$ and almost all $\tau \in [0,T]$. Using a density argument we moreover conclude that \eqref{eq:MEweak_lim2} holds for all $\vphi\in C^1([0,T];C^1_c(\Omega_R))$.

Finally, we pass to the limit in the energy inequality \eqref{eq:EIvar21}. Using similar arguments as in passing to the limit in the momentum equation we get
\begin{equation*}
    \rho_n|\vu_n|^2 = \frac{|\vm_n|^2}{\rho_n} = E_0[\rho_n, \vm_n]\to \overline{E_0[\vr,\vm]} \quad \text{ weakly}^* \text{ in } L^\infty(0,T;\mathcal{M}(\overline{\Omega_R})),
\end{equation*}
and
\begin{equation*}
    \mathcal{E}^{\rho_\infty}(\rho_n) \to \overline{\mathcal{E}^{\rho_\infty}(\rho)} \quad \text{ weakly}^* \text{ in } L^\infty(0,T;\mathcal{M}(\overline{\Omega_R}))
\end{equation*}
with
\begin{equation*}
    \mathfrak{E}_2 = \overline{E_0[\vr,\vm]} - E_0[\vr,\vm] \geq 0
\end{equation*}
and
\begin{equation*}
    \mathfrak{E}_3 = \overline{\mathcal{E}^{\rho_\infty}(\rho)} - \mathcal{E}^{\rho_\infty}(\rho) \geq 0,
\end{equation*}
whence
\begin{equation*}
    \mathfrak{E} = \mathfrak{E}_1 + \mathfrak{E}_2 + \mathfrak{E}_3 \geq 0.
\end{equation*}
Since
\begin{equation*}
    \tr[\mathfrak{R}_1 + \mathfrak{R}_3] = 3(\gamma-1)(\mathfrak{E}_1 + \mathfrak{E}_3)
\end{equation*}
and
\begin{equation*}
    \tr[\mathfrak{R}_2] = \mathfrak{E}_2,
\end{equation*}
it holds
\begin{equation}\label{eq:REineq1}
    \min\left\{1,3(\gamma-1)\right\}\mathfrak{E} \leq \tr[\mathfrak{R}] \leq \max\left\{1,3(\gamma-1)\right\}\mathfrak{E}.
\end{equation}
We use the convexity of the dissipative terms (see Lemma \ref{l:convex}) to conclude
\begin{equation}\label{eq:EIvar3}
\begin{split}
&\int_{\Omega_R}\left(\frac{1}{2}\rho|\vu |^2+ \mathcal{E}^{\rho_\infty}(\rho )\right)(\tau,\cdot) \dy + \int_{\Omega_R} \d\mathfrak{E}(\tau) \\
&\quad +\int^\tau_0\int_{\Omega_R}\left(\mu|\nabla_x\vu |^2+\delta|\partial_z\vu|^2 + (\mu+\lambda)|\Div\vu |^2\right) \dydt \\
&\quad \leq \int_{\Omega_R}\left(\frac{1}{2}\frac{|(\rho\vu)_{0,R}|^2}{\rho_{0,R}} +\mathcal{E}^{\rho_\infty}(\rho_{0,R} )\right) \dy + \int_0^\tau\int_{\Omega_R} \rho\vf\cdot\vu \dydt
\end{split}
\end{equation}
for almost all $\tau \in (0,T)$.

In the next section, we will pass with $R \to \infty$. Therefore we denote the limits obtained in this section as $\rho_R, \vu_R, \mathfrak{R}_R$ and $\mathfrak{E}_R$. We extend $\rho_R$ by $\rho_\infty$ and $\vu_R, \mathfrak{R}_R$ and $\mathfrak{E}_R$ by zero from their domain of definition $\Omega_R$ to $\Omega$. This way, we rewrite the limit equation \eqref{eq:CEweak_lim2} as
\begin{equation}\label{eq:CEweak_lim21}
\int_{\Omega}(\rho_R\varphi)(\tau,y) \dy - \int_{\Omega}\rho_{0,R}(y)\varphi(0,y) \dy = \int^\tau_0\int_{\Omega}\left(\rho_R\partial_t\varphi + \rho_R\vu_R\cdot\Grad\varphi\right) \dydt,
\end{equation}
for all $\varphi\in C^1_c([0,T]\times\overline{\Omega})$ and almost all $\tau \in [0,T]$. For a fixed test function $\varphi$ this formulation however holds only for $R$ satisfying the condition $\supp\ \varphi(t,\cdot) \subset \Omega_R$ for all $t \in [0,T]$.

Similarly, we rewrite \eqref{eq:MEweak_lim2} as
\begin{equation}\label{eq:MEweak_lim21}
\begin{split}
&\int_{\Omega}(\rho_R \vu_R \cdot\vphi)(\tau,y)\dy - \int_{\Omega}(\rho\vu)_{0,R}(y) \cdot\vphi(0,y) \dy  = \int^\tau_0\int_{\Omega} \Grad \vphi : \d\mathfrak{R}_R(t)\dt\\
& \quad + \int^\tau_0\int_{\Omega}\left(\rho_R \vu_R\cdot \partial_t\vphi + \rho_R \vu_R \otimes\vu_R :\Grad\vphi
+p(\rho_R)\Div\vphi\right)\dydt    \\
&\quad -\int^\tau_0\int_{\Omega}\left(\mu\nabla_x \vu_R :\nabla_x\vphi + \delta\partial_z\vu_R \cdot \partial_z\vphi
+(\mu+\lambda)\Div\vu_R\, \Div\vphi+\rho_R\vf\cdot\vphi\right) \dydt
\end{split}
\end{equation}
for all $\vphi\in C^1([0,T];C^1_c(\Omega))$ and almost all $\tau \in [0,T]$. Again, for a fixed test function $\vphi$ this formulation holds only for sufficiently large $R$, namely for $R$ satisfying $\supp\ \vphi(t,\cdot) \subset \Omega_R$ for all $t \in [0,T]$.

Finally, we rewrite \eqref{eq:EIvar3} as
\begin{equation}\label{eq:EIvar31}
\begin{split}
&\int_{\Omega}\left(\frac{1}{2}\rho_R|\vu_R |^2+ \mathcal{E}^{\rho_\infty}(\rho_R )\right)(\tau,\cdot) \dy + \int_{\Omega} \d\mathfrak{E}_R(\tau) \\
&\quad +\int^\tau_0\int_{\Omega}\left(\mu|\nabla_x\vu_R |^2+\delta|\partial_z\vu_R|^2 + (\mu+\lambda)|\Div\vu_R |^2\right) \dydt \\
&\quad \leq \int_{\Omega}\left(\frac{1}{2}\frac{|(\rho\vu)_{0,R}|^2}{\rho_{0,R}} +\mathcal{E}^{\rho_\infty}(\rho_{0,R} )\right) \dy + \int_0^\tau\int_{\Omega} \rho_R\vf\cdot\vu_R \dydt
\end{split}
\end{equation}
for almost all $\tau \in (0,T)$.

\subsection{The limit $R \to \infty$}

We start with the a priori estimates arising from \eqref{eq:EIvar31}. The source term on the right hand side can be estimated similarly as in \eqref{eq:RHSsplit}-\eqref{eq:RHSsplit2}. Here we use essential and residual parts with respect to $\rho_R$, i.e. $[h]_{ess,\rho_R}$ and $[h]_{res,\rho_R}$. Note in particular that for all $R \in \N$, $\vu_R \in D^{1,2}_0(\Omega)$, where $D^{1,2}_0(\Omega)$ denotes the homogeneous Sobolev space on the unbounded domain $\Omega$, therefore we still have the inequality
\begin{equation}
    \|\vu_R\|_{L^6(\Omega)} \leq C \|\nabla_y\vu_R\|_{L^2(\Omega)}
\end{equation}
with a constant independent of $R \in \N$ (see \cite[Section 1.3.6.4]{n}). The arising norms of $\vf$ can be easily bounded using the assumption $\vf \in L^\infty((0,T)\times\Omega) \cap L^\infty(0,T;L^1(\Omega))$.

Thus, we are able to deduce the following estimates
\begin{align}
    & \|1_{ess,\rho_R}(\rho_R-\rhoin)\|_{L^\infty(0,T;L^2(\Omega))} \leq C, \label{eq:essential} \\
    & \|1_{res,\rho_R}(\rho_R-\rhoin)\|_{L^\infty(0,T;L^\gamma(\Omega))} \leq C, \label{eq:residual} \\
    &\|\sqrt{\rho_R} \vu_R\|_{L^\infty(0,T;L^2(\Omega))} \leq C, \label{eq:sqrrhoubound}\\
    &\|\nabla_y\vu_R\|_{L^2(0,T;L^2(\Omega))} \leq C, \\
    &\|\mathfrak{E}_R\|_{L^\infty(0,T;\mathcal{M}(\overline{\Omega}))} \leq C.
\end{align}
Note that since $[1]_{res,\rho_R}|\rho_R-\rhoin| \geq \frac{\rhoin}{2}$, the bound \eqref{eq:residual} implies also
\begin{equation}\label{eq:resmeasure}
    \|[1]_{res,\rho_R}\|_{L^\infty(0,T;L^1(\Omega))} \leq C
\end{equation}
as well as
\begin{equation}
    \|[\rho_R]_{res,\rho_R}\|_{L^\infty(0,T;L^\gamma(\Omega))} \leq C.
\end{equation}
We use this information together with \eqref{eq:essential} and \eqref{eq:residual} to conclude that for any $m \in \N$ and any $R \geq m$ we have
\begin{equation}\label{eq:rhorbdd}
    \|\rho_R\|_{L^\infty(0,T;L^\gamma(\Omega_m))} \leq C(m).
\end{equation}
Similarly, using estimates
\begin{equation*}
\big{(}[\rho_R]_{res,\rho_R}\mathbf u_R\big{)}^{\frac{2\gamma}{\gamma+1}}\leq C\big{(}\rho_R|\mathbf u_R|^2\big{)}^{\frac{\gamma}{\gamma+1}}
([\rho_R]_{res,\rho_R}^\gamma)^{\frac{1}{\gamma+1}},\hspace{15pt}
\big{(}[\rho_R]_{ess,\rho_R}\mathbf u_R\big{)}^{2}\leq C\rho_R|\mathbf u_R|^2,
\end{equation*}
we end up with
\begin{equation}\label{4.21}
\|\rho_R\mathbf u_{R}\|_{L^\infty(0,T; L^\frac{2\gamma}{\gamma+1}(\Omega_m))}\leq C(m).
\end{equation}


We need the following version of the Poincar\' e inequality.
\begin{lemma}(\cite{j} Lemma 3.1)
For any $M>0$ there exists $c(M)>0$ such that
\begin{align*}
&\forall w\in W^{1,2}(\mathbb{R}^3),\hspace{2pt} \forall V\subset\mathbb{R}^3, \hspace{2pt} |V|\leq M,\\
&\|w\|^2_{L^2(\mathbb{R}^3)}\leq c(M)\left(\|\nabla w\|^2_{L^2(\mathbb{R}^3)}+\int_{\mathbb{R}^3\setminus V}w^2 \dy\right).
\end{align*}
\end{lemma}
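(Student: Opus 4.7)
The plan is to give a short direct proof that exploits the fact that the critical Sobolev exponent in three dimensions, $6$, strictly exceeds $2$, leaving a little bit of room to control the $L^2$ norm over any set of uniformly bounded measure. The only input from Sobolev theory required is the Gagliardo--Nirenberg--Sobolev inequality $\|w\|_{L^6(\R^3)} \leq C_S \|\nabla w\|_{L^2(\R^3)}$, valid for any $w \in W^{1,2}(\R^3)$.

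Concretely, I would split
\[
\|w\|^2_{L^2(\R^3)} = \int_V w^2 \dy + \int_{\R^3 \setminus V} w^2 \dy
\]
and estimate the first integral by H\"older's inequality with exponents $3$ and $3/2$, using $|V| \leq M$:
\[
\int_V w^2 \dy \leq \left(\int_V w^6 \dy\right)^{1/3} |V|^{2/3} \leq M^{2/3} \|w\|^2_{L^6(\R^3)}.
\]
Plugging in the Sobolev inequality then yields $\int_V w^2 \dy \leq C_S^2 M^{2/3} \|\nabla w\|^2_{L^2(\R^3)}$, and combining with the trivial bound on $\R^3 \setminus V$ gives the claim with the explicit constant $c(M) = \max\{C_S^2 M^{2/3},\, 1\}$.

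There is no serious obstacle here: the argument is essentially one line once one realises that it is the finite measure of $V$, rather than any compactness or Poincar\'e-type cancellation, that does the work. An alternative route would be a compactness-by-contradiction argument in $W^{1,2}(\R^3)$ --- normalising $\|w_n\|_{L^2} = 1$ and using $\|\nabla w_n\|_{L^2} \to 0$ together with $\int_{\R^3 \setminus V_n} w_n^2 \dy \to 0$ to derive a contradiction --- but since the sets $V_n$ may escape to infinity, one cannot rely on the Rellich compactness of the embedding $W^{1,2} \hookrightarrow L^2_{loc}$ and one is ultimately forced to invoke the (non-compact) critical embedding into $L^6$ anyway. The direct proof above is therefore cleaner and has the added benefit of providing a transparent expression for the constant $c(M)$.
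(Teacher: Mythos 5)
Your proof is correct: H\"older's inequality with exponents $3$ and $3/2$ on the set $V$ of measure at most $M$, combined with the critical embedding $\|w\|_{L^6(\R^3)}\leq C_S\|\nabla w\|_{L^2(\R^3)}$ (valid for all $w\in W^{1,2}(\R^3)$), immediately gives the stated bound with $c(M)=\max\{C_S^2M^{2/3},1\}$, and your remark that a compactness argument would fail because the sets $V$ are not localized is also accurate. The paper does not reproduce a proof at all --- it cites Lemma 3.1 of Jessl\'e--Jin--Novotn\'y --- and your argument is precisely the standard one behind that result, so it supplies the omitted details rather than deviating from them.
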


We apply this Lemma for $w = \vu_{R}$ (prolonged by zero from $\Omega$ to $\R^3$), $V = \Omega_{res}(t)$, where
\begin{align*}
    &\Omega_{ess}(t) = \left\{x \in \Omega: \rho_R(t,x) \in \left(\frac{\rhoin}{2},2\rhoin\right) \right\}, \\
    &\Omega_{res}(t) = \Omega \setminus \Omega_{ess}(t).
\end{align*}
Note that \eqref{eq:resmeasure} provides the existence of $M$ independent of $t \in (0,T)$ bounding the measure of the set $\Omega_{res}(t)$. This way we obtain
\begin{equation}
    \|\vu_R(t,\cdot)\|_{L^2(\Omega)}^2 \leq C + \int_{\Omega_{ess}(t)} |\vu_R(t,\cdot)|^2 \dy \leq C\left(1 + \int_{\Omega_{ess}(t)} \rho_R(t,\cdot)|\vu_R(t,\cdot)|^2 \dy\right) \leq C.
\end{equation}
Hence we are able to deduce
\begin{align*}
\|\mathbf u_R\|_{L^2(0,T;W^{1,2}(\Omega))}\leq C.
\end{align*}

The bounds above together with Lemma \ref{l2.3} give rise to the following convergences
\begin{align}
&\rho_R \to \rho \quad \text{weakly}^* \text{ in } L^\infty(0,T;L^\gamma(\Omega_m)) \quad \text{for all } m \in \N, \label{eq:rhoconv} \\
&\vu_R \to \vu \quad \text{weakly} \text{ in } L^2(0,T;W^{1,2}(\Omega)), \\
&\rho_R\vu_R \to \vm \quad \text{weakly}^* \text{ in } L^\infty(0,T;L^\frac{2\gamma}{\gamma+1}(\Omega_m)) \quad \text{for all } m \in \N, \label{eq:rhouconv}\\
&\mathfrak{E}_R \to \mathfrak{E}_1 \quad \text{weakly}^* \text{ in } L^\infty(0,T;\mathcal{M}(\overline{\Omega})), \\
&\mathfrak{R}_R \to \mathfrak{R}_1 \quad \text{weakly}^* \text{ in } L^\infty(0,T;\mathcal{M}_{3\times 3}(\overline{\Omega})).
\end{align}
Moreover, it is not difficult to show using the continuity equation \eqref{eq:CEweak_lim21} that for any $m\in\mathbb{N}$, the sequence $\rho_{R}$ is uniformly continuous in $W^{-1,\frac{2\gamma}{\gamma+1}}(\Omega_m)$. Since $\rho_R$ belongs to $C_{weak}([0,T];L^\gamma(\Omega_m))$, and is uniformly bounded in $L^\gamma(\Omega_m)$, we end up with
\begin{equation}\label{4.23}
\rho_{R}\rightarrow\rho \quad \text{ in } C_{weak}([0,T];L^\gamma(\Omega_m)).
\end{equation}
Similarly, we use the momentum equation \eqref{eq:MEweak_lim21} to deduce that for all $m \in \N$ and all $\vphi \in C^\infty(\Omega_m)$ it holds
\begin{equation*}
    \left|\int_{\Omega_m} (\rho_R\vu_R)(\tau,\cdot)\cdot \vphi \dy - \int_{\Omega_m} (\rho_R\vu_R)(s,\cdot)\cdot \vphi \dy\right| \leq C\left|\tau-s\right|^{\frac 12},
\end{equation*}
which together with \eqref{eq:rhouconv} gives rise to
\begin{equation}
\rho_R\vu_R \to \vm \quad \text{ in } C_{weak}([0,T];L^\frac{2\gamma}{\gamma+1}(\Omega_m)) \quad \text{for all } m \in \N. \label{eq:rhouconv2}
\end{equation}
Next, we identify the limit of the momentum using Lemma \ref{l:rhou_iden} on domains $Q_m = (0,T) \times \Omega_m$ for any $m \in \N$ similarly as in Section \ref{s:limit_n}. We end up with
\begin{equation}
    \vm = \rho\vu \quad \text{ a.e. in } (0,T)\times \Omega_m \quad \text{ for any } m\in\N. \label{eq:rhoum}
\end{equation}
Now we are able to pass to the limit in \eqref{eq:CEweak_lim21}. For every fixed test function $\varphi \in C^1_c([0,T]\times\overline{\Omega})$ we find $m \in \N$ sufficiently large such that $\varphi = 0$ in $[0,T] \times \left(\Omega \setminus \Omega_m\right)$ and use \eqref{4.23}, \eqref{eq:rhouconv} and \eqref{eq:rhoum} to conclude that
\begin{equation}\label{eq:CEweak_lim3}
\int_{\Omega}(\rho\varphi)(\tau,y) \dy - \int_{\Omega}\rho_{0}(y)\varphi(0,y) \dy = \int^\tau_0\int_{\Omega}\left(\rho\partial_t\varphi + \rho\vu\cdot\Grad\varphi\right) \dydt,
\end{equation}
holds for all $\varphi\in C^1_c([0,T]\times\overline{\Omega})$ and almost all $\tau \in [0,T]$.

We proceed similarly in the limit of the momentum equation \eqref{eq:MEweak_lim21}. Again, for every test function $\vphi \in C^1([0,T];C^1_c(\Omega))$ we find $m \in \N$ sufficiently large such that $\vphi = 0$ in $[0,T] \times \left(\Omega \setminus \Omega_m\right)$. The bound \eqref{eq:rhorbdd} can be reformulated as
\begin{equation}
    \|p(\rho_R)\|_{L^\infty(0,T;\mathcal{M}(\Omega_m))} \leq C(m),
\end{equation}
so using Lemma \ref{l2.3} we get that
\begin{equation}
    p(\rho_R) \to \overline{p(\rho)} \quad \text{ weakly}^* \text{ in } L^\infty(0,T;\mathcal{M}(\Omega_m)) \text{ for any } m \in \N.
\end{equation}
The convexity of the pressure implies that
\begin{equation*}
    \mathfrak{R}_3 = (\overline{p(\rho)} - p(\rho))\mathbb{I} \geq 0.
\end{equation*}
Since the sequence $\rho_R|\vu_R|^2$, which can be rewritten as $\frac{|\vm_R|^2}{\rho_R}$ when $\rho_R > 0$, is uniformly bounded in $L^\infty(0,T;L^1(\Omega))$, we can similarly as in Section \ref{s:limit_n} conclude that
\begin{equation*}
    \rho_R\vu_R \otimes \vu_R = \frac{\vm_R \otimes \vm_R}{\rho_R} = E[\rho_R, \vm_R]\to \overline{E[\vr,\vm]} \quad \text{ weakly}^* \text{ in } L^\infty(0,T;\mathcal{M}_{3\times 3}(\overline{\Omega}))
\end{equation*}
with
\begin{equation*}
    \mathfrak{R}_2 := \overline{E[\vr,\vm]} - E[\vr,\vm] \geq 0,
\end{equation*}
due to the fact that $E_\xi[\rho,\vm]$ being a lower semicontinuous convex function for any $\xi \in \R^3$. Defining
\begin{equation*}
    \mathfrak{R} = \mathfrak{R}_1 + \mathfrak{R}_2 + \mathfrak{R}_3 \geq 0,
\end{equation*}
we may pass to the limit in \eqref{eq:MEweak_lim21} and recover
\begin{equation}\label{eq:MEweak_lim3}
\begin{split}
&\int_{\Omega}(\rho \vu \cdot\vphi)(\tau,y)\dy - \int_{\Omega}(\rho\vu)_{0}(y) \cdot\vphi(0,y) \dy  = \int^\tau_0\int_{\Omega} \Grad \vphi : \d\mathfrak{R}(t)\dt\\
& \quad + \int^\tau_0\int_{\Omega}\left(\rho \vu\cdot \partial_t\vphi + \rho \vu \otimes\vu :\Grad\vphi
+p(\rho)\Div\vphi\right)\dydt    \\
&\quad -\int^\tau_0\int_{\Omega}\left(\mu\nabla_x \vu :\nabla_x\vphi + \delta\partial_z\vu \cdot \partial_z\vphi
+(\mu+\lambda)\Div\vu\, \Div\vphi+\rho\vf\cdot\vphi\right) \dydt
\end{split}
\end{equation}
for all $\vphi\in C^1([0,T];C^1_c(\Omega))$ and almost all $\tau \in [0,T]$.

Finally, our aim is to pass to the limit in the energy inequality \eqref{eq:EIvar31}. In fact, we are going to show that it holds
\begin{equation}\label{eq:EIvar4}
\begin{split}
&\int_{\Omega_m}\left(\frac{1}{2}\rho|\vu |^2+ \mathcal{E}^{\rho_\infty}(\rho )\right)(\tau,\cdot) \dy + \int_{\Omega_m} \d\mathfrak{E}(\tau) \\
&\quad +\int^\tau_0\int_{\Omega_m}\left(\mu|\nabla_x\vu |^2+\delta|\partial_z\vu|^2 + (\mu+\lambda)|\Div\vu |^2\right) \dydt \\
&\quad \leq \int_{\Omega}\left(\frac{1}{2}\frac{|(\rho\vu)_{0}|^2}{\rho_{0}} +\mathcal{E}^{\rho_\infty}(\rho_{0} )\right) \dy + \int_0^\tau\int_{\Omega} \rho\vf\cdot\vu \dydt
\end{split}
\end{equation}
for almost all $\tau \in (0,T)$ and all sufficiently large $m \in \N$ with some $\mathfrak{E} \in L^\infty(0,T;\mathcal{M}^+(\overline{\Omega}))$. Then, since the right hand side is fixed, we obtain the validity of the energy inequality in the form \eqref{2.3}.

Let us start with \eqref{eq:EIvar31}. Since all terms on the left hand side of this inequality are non-negative, restricting their integrals from $\Omega$ to $\Omega_m$ can not increase the value of these integrals, so we get
\begin{equation}\label{eq:EIvar40}
\begin{split}
&\int_{\Omega_m}\left(\frac{1}{2}\rho_R|\vu_R |^2+ \mathcal{E}^{\rho_\infty}(\rho_R )\right)(\tau,\cdot) \dy + \int_{\Omega_m} \d\mathfrak{E}_R(\tau) \\
&\quad +\int^\tau_0\int_{\Omega_m}\left(\mu|\nabla_x\vu_R |^2+\delta|\partial_z\vu_R|^2 + (\mu+\lambda)|\Div\vu_R |^2\right) \dydt \\
&\quad \leq \int_{\Omega}\left(\frac{1}{2}\frac{|(\rho\vu)_{0,R}|^2}{\rho_{0,R}} +\mathcal{E}^{\rho_\infty}(\rho_{0,R} )\right) \dy + \int_0^\tau\int_{\Omega} \rho_R\vf\cdot\vu_R \dydt
\end{split}
\end{equation}
for almost all $\tau \in (0,T)$ and all $m \in \N$. Now, we pass to the limit in \eqref{eq:EIvar40}. The first integral on the right hand side converges to its natural counterpart. In order to handle the second one we show that for any $\varepsilon > 0$ it holds
\begin{equation*}
    \lim_{R \to \infty} \int_0^\tau \int_{\Omega} |\rho_R\vf\cdot\vu_R - \vr\vf\cdot\vu| \dydt \leq \varepsilon.
\end{equation*}
Fixing $\varepsilon$ we take $N \in \N$ sufficiently large and split
\begin{equation*}
    \int_0^\tau \int_{\Omega} |\rho_R\vf\cdot\vu_R - \vr\vf\cdot\vu| \dydt = \int_0^\tau \int_{\Omega_N} |(\rho_R\vu_R - \vr\vu)\cdot\vf| \dydt + \int_0^\tau \int_{\Omega \setminus \Omega_N} |(\rho_R\vu_R - \vr\vu)\cdot\vf| \dydt.
\end{equation*}
The first integral converges to zero as $R \to \infty$ due to \eqref{eq:rhouconv2} and \eqref{eq:rhoum}. We claim, that the second integral can be made arbitrarily small by taking $N$ large enough. Indeed, this surely holds for $\int_0^\tau\int_{\Omega \setminus\Omega_N} |\rho\vu\cdot\vf| \dydt$ as $\rho\vu\cdot\vf \in L^\infty(0,T;L^{\frac{2\gamma}{\gamma-1}}(\Omega))$ and we can estimate
\begin{align*}
    &\int_0^\tau \int_{\Omega \setminus \Omega_N} |\rho_R\vu_R\cdot\vf| \dy \leq C\|\sqrt{\rho_R}\vu_R\|_{L^\infty(0,T;L^2(\Omega))}\left(\int_0^\tau\int_{\Omega \setminus \Omega_N}\rho_R|\vf|^2\dydt\right)^{\frac 12} \\
    &\quad \leq C\left(\int_0^\tau\int_{\Omega \setminus \Omega_N}|\rho_R-\rhoin||\vf|^2\dydt\right)^{\frac 12} + C\|\vf\|_{L^2((0,T)\times(\Omega\setminus\Omega_N))} \\
    &\quad \leq C\left(\int_0^\tau\int_{\Omega \setminus \Omega_N}[1]_{res,\rho_R}|\rho_R-\rhoin||\vf|^2\dydt\right)^{\frac 12} + C\|\vf\|_{L^2((0,T)\times(\Omega\setminus\Omega_N))} \\
    &\quad \leq C\|[1]_{res,\rho_R}(\rho_R-\rhoin)\|_{L^\infty(0,T;L^\gamma(\Omega))}\left(\int_0^\tau\int_{\Omega \setminus \Omega_N}|\vf|^{\frac{2\gamma}{\gamma-1}}\dydt\right)^{\frac{\gamma-1}{2\gamma}} + C\|\vf\|_{L^2((0,T)\times(\Omega\setminus\Omega_N))} \\
    &\quad \leq C\|\vf\|_{L^{\frac{2\gamma}{\gamma-1}}((0,T)\times(\Omega\setminus\Omega_N))} + C\|\vf\|_{L^2((0,T)\times(\Omega\setminus\Omega_N))},
\end{align*}
where we used \eqref{eq:residual} and \eqref{eq:sqrrhoubound}. The arising terms can be made arbitrarily small by taking $N$ sufficiently large.

On the left hand side of \eqref{eq:EIvar40} we use the weak lower semicontinuity of convex functions (see Lemma \ref{l:convex}) to deduce that
\begin{equation*}
\begin{split}
    &\int^\tau_0\int_{\Omega_m}\left(\mu|\nabla_x\vu |^2+\delta|\partial_z\vu|^2 + (\mu+\lambda)|\Div\vu |^2\right) \dydt \\
    & \quad \leq \liminf_{R \to \infty} \int^\tau_0\int_{\Omega_m}\left(\mu|\nabla_x\vu_R |^2+\delta|\partial_z\vu_R|^2 + (\mu+\lambda)|\Div\vu_R |^2\right) \dydt
\end{split}
\end{equation*}
and similarly Lemma \ref{l2.4} to deduce
\begin{equation*}
    \int_{\Omega_m} \d \mathfrak{E}_1(\tau) \leq \liminf_{R \to \infty} \int_{\Omega_m} \d\mathfrak{E}_R(\tau).
\end{equation*}
Finally, the sequences $\rho_R|\vu_R|^2$ and $\mathcal{E}^{\rhoin}(\rho_R)$ are bounded in $L^\infty(0,T;L^1(\Omega))$ and thus the following convergences hold
\begin{equation*}
    \rho_R|\vu_R|^2 = \frac{|\vm_R|^2}{\rho_R} = E_0[\rho_R, \vm_R]\to \overline{E_0[\vr,\vm]} \quad \text{ weakly}^* \text{ in } L^\infty(0,T;\mathcal{M}(\overline{\Omega})),
\end{equation*}
and
\begin{equation*}
    \mathcal{E}^{\rho_\infty}(\rho_R) \to \overline{\mathcal{E}^{\rho_\infty}(\rho)} \quad \text{ weakly}^* \text{ in } L^\infty(0,T;\mathcal{M}(\overline{\Omega})),
\end{equation*}
with
\begin{equation*}
    \mathfrak{E}_2 = \overline{E_0[\vr,\vm]} - E_0[\vr,\vm] \geq 0,
\end{equation*}
and
\begin{equation*}
    \mathfrak{E}_3 = \overline{\mathcal{E}^{\rho_\infty}(\rho)} - \mathcal{E}^{\rho_\infty}(\rho) \geq 0,
\end{equation*}
whence
\begin{equation*}
    \mathfrak{E} = \mathfrak{E}_1 + \mathfrak{E}_2 + \mathfrak{E}_3 \geq 0.
\end{equation*}
This enables us to conclude \eqref{eq:EIvar4} and therefore also \eqref{2.3}. The relation \eqref{eq:REineq1} is preserved due to the same reasons as in Section \ref{s:limit_n}. Note that the linear term in $\mathcal{E}^{\rhoin}$ has a proper limit on $\Omega_m$ due to \eqref{4.23} and therefore it does not contribute to the measure $\mathfrak{E}$ at all. This finishes the proof of Theorem \ref{t3.1}.

\section{Proof of Theorem \ref{t:REI}}\label{s:5}

The proof of the relative entropy inequality follows standard steps. First, observe that $\vU$ is a proper test function in the momentum equation \eqref{2.2}, which then yields
\begin{equation}\label{eq:REI_U}
\begin{split}
&\int_{\Omega}(\rho\mathbf u\cdot\vU)(\tau,y) \dy = 
\int_{\Omega}(\rho\mathbf u)_0(y)\cdot\vU(0,y) \dy + \int^\tau_0\int_\Omega\Grad\vU:\d\mathfrak{R}(t)\dt \\
&\qquad + \int^\tau_0\int_{\Omega}\left(\rho\mathbf u\cdot\partial_t\vU + \rho\mathbf{u}\otimes\mathbf{u}:\Grad\vU
 + p(\rho)\Div\vU\right)\dydt \\
&\qquad - \int^\tau_0\int_{\Omega} \left(\mu\nabla_x\mathbf u:\nabla_x\vU + \delta\partial_z\mathbf u\cdot \partial_z\vU + (\mu+\lambda)\Div\mathbf u\ \Div\vU
+\rho \mathbf{f}\cdot\vU\right)\dydt.
\end{split}
\end{equation}
Similarly, $\frac12|\vU|^2$ and $P'(r)-P'(\rhoin)$ are suitable test functions in the continuity equation \eqref{2.1} and therefore we obtain the following two identities
\begin{equation}\label{eq:REI_U2}
\int_{\Omega}\frac12 \rho|\vU|^2(\tau,y) \dy = \int_{\Omega}\frac12 \rho_0(y)|\vU(0,y)|^2 \dy + \int^\tau_0\int_{\Omega}\left(\rho\vU\cdot\partial_t\vU + \rho(\vU\otimes\mathbf{u}): \Grad\vU\right) \dydt,
\end{equation}
and
\begin{equation}\label{eq:REI_Pprime}
\begin{split}
&\int_{\Omega}\rho(P'(r)-P'(\rhoin))(\tau,y) \dy = \int_{\Omega}\rho_0(y)(P'(r(0,y))-P'(\rhoin)) \dy \\
&\quad + \int^\tau_0\int_{\Omega}\left(\rho\partial_t P'(r) + \rho\mathbf{u}\cdot \Grad P'(r)\right) \dydt.
\end{split}
\end{equation}
Finally, we observe that it holds
\begin{equation}\label{eq:REI_press}
\begin{split}
&\int_{\Omega}(p(r)-p(\rhoin))(\tau,y) \dy = \int_{\Omega}(p(r(0,y))-p(\rhoin)) \dy + \int^\tau_0\int_{\Omega} \partial_t p(r) \dydt \\
&\quad = \int_{\Omega}(p(r(0,y))-p(\rhoin)) \dy + \int^\tau_0\int_{\Omega} r\partial_t P'(r) \dydt,
\end{split}
\end{equation}
where we used \eqref{eq:pPrel}. Now we sum \eqref{2.3} together with \eqref{eq:REI_U2} and \eqref{eq:REI_press} and subtract \eqref{eq:REI_U} and \eqref{eq:REI_Pprime} and use also \eqref{eq:pPrel}. Straightforward calculations show that all terms containing $\rhoin$ cancel and we recover \eqref{eq:REI} with right hand side given as in \eqref{eq:Remainder}. The proof of Theorem \ref{t:REI} is finished.

\section{Proof of Theorem \ref{t:WSU}}\label{s:6}

The main idea is to use the relative entropy inequality \eqref{eq:REI} with test functions $(r,\vU) = (\tilde \rho,\tilde \vu)$, where $(\tilde\rho,\tilde\vu)$ is the strong solution to the problem \eqref{1a}-\eqref{eq:IC}. In order to to this however, we first need to argue that this is admissible, since the strong solution is not compactly supported and hence it cannot be directly chosen as a proper test function. The class of test functions in \eqref{eq:REI} can be however significantly enlarged using a standard density argument. Therefore, we conclude, that $(r,\vU)$ is a proper test function in \eqref{eq:REI} as long as all integrals in the inequality are finite and the boundary behavior $\vU|_{\partial\Omega} = 0$, $(r,\vU) \to (\rhoin,0)$ as $y \to \infty$ is satisfied. It is not difficult to check that for couples of functions in the regularity class \eqref{eq:WSU_reg} this indeed holds and therefore such couple is indeed a proper test function.

Using standard arguments we manipulate terms in the remainder $\mathcal{R}(\rho,\vu,\mathfrak{R},\vrt,\vut)$ defined in \eqref{eq:Remainder} as follows. First,
\begin{equation}\label{5.6}
\begin{split}
&\int_\Omega\rho(\partial_t\vut+\mathbf u\cdot\nabla_y\vut)\cdot(\vut-\mathbf u) \dy \\
&\quad = \int_\Omega\rho(\partial_t\vut+\vut\cdot\nabla_y\vut)\cdot(\vut-\mathbf u)\dy - \int_\Omega\rho(\mathbf u-\vut)\otimes(\mathbf u-\vut) :\nabla_y\vut\dy.
\end{split}
\end{equation}

Next, by virtue of the strong formulation of the system (1.2), and the fact that $(\vrt,\vut)$ is the strong solution, we conclude
\begin{equation*}
\vut_t+\vut\cdot\nabla_y\vut=-\frac{1}{\vrt}\nabla_y p(\vrt)+ \frac{1}{\vrt}\left(\mu\Delta_x\vut+\delta\partial_{zz}\vut+(\mu+\lambda)\nabla_y\Div\vut\right).
\end{equation*}
Therefore, we get
\begin{equation}\label{5.8}
\begin{split}
&\int_\Omega\rho(\partial_t\vut+\vut\cdot\nabla_y\vut)\cdot(\vut-\mathbf u)\dy = - \int_\Omega \frac{\rho}{\vrt}\nabla_y p(\vrt)\cdot(\vut-\vu)\dy \\
&\quad + \int_\Omega \frac{\rho-\vrt}{\vrt}(\mu\Delta_x\vut+\delta\partial_{zz}\vut+(\mu+\lambda)\nabla_y\Div\vut)\cdot(\vut-\vu)\dy \\
&\quad + \int_\Omega (\mu\Delta_x\vut+\delta\partial_{zz}\vut+(\mu+\lambda)\nabla_y\Div\vut)\cdot(\vut-\vu)\dy,
\end{split}
\end{equation}
where the last integral can be integrated by parts to obtain
\begin{equation}
\begin{split}
&\int_\Omega (\mu\Delta_x\vut+\delta\partial_{zz}\vut+(\mu+\lambda)\nabla_y\Div\vut)\cdot(\vut-\vu)\dy \\
&\quad = - \int_\Omega (\mu\nabla_x\vut:\nabla_x(\vut-\vu)+\delta\partial_{z}\vut\cdot \partial_{z}(\vut-\vu)+(\mu+\lambda)\Div\vut\ \Div(\vut-\vu)\dy.
\end{split}
\end{equation}
Moving the above expressions to the left hand side of the relative entropy inequality \eqref{eq:REI} together with the integral on the first line of the right hand side of \eqref{eq:Remainder} and we obtain the following version of the inequality
\begin{equation}\label{eq:REI_step1}
\begin{split}
   &\int_{\Omega}\left(\frac 12 \rho |\vu-\vut|^2 + \mathcal{E}^{\vrt}(\rho)\right)(\tau,y) \dy + \int_\Omega \d\mathfrak{E}(\tau) \\
   &\quad + \int^\tau_0\int_\Omega\left(\mu|\nabla_x\mathbf u-\nabla_x\vut|^2+\delta|\partial_z\mathbf u - \partial_z\vut|^2+(\mu+\lambda)|\Div\mathbf u-\Div\vut|^2\right)\dydt \\
   &\quad \leq - \int_0^\tau \int_\Omega \nabla_y \vut:\d \mathfrak{R}(t) \dt -\int_0^\tau\int_\Omega \rho(\vu-\vut)\otimes(\vu-\vut):\nabla_y\vut \dydt \\
   &\quad + \int_0^\tau\int_\Omega \frac{\rho-\vrt}{\vrt}\left( \mu\Delta_x\vut + \delta\partial_{zz}\vut + (\mu+\lambda)\nabla_y\Div\vut \right)\cdot(\vut-\vu) \dydt \\
   &\quad - \int_0^\tau\int_\Omega\left( p(\rho)\Div\vut + (\rho-\vrt)\partial_t P'(\vrt) + \rho\vu\cdot\nabla_y P'(\vrt)+ \frac{\rho}{\vrt}\nabla_y p(\vrt)\cdot(\vut-\vu)\right) \dydt.  
\end{split}
\end{equation}
Next, we use \eqref{eq:pPrel} to simplify the last two terms on the right hand side of \eqref{eq:REI_step1}:
\begin{equation}
\begin{split}
    &\int_0^\tau\int_\Omega \left(\rho\vu\cdot\nabla_y P'(\vrt)+ \frac{\rho}{\vrt}\nabla_y p(\vrt)\cdot(\vut-\vu)\right) \dydt \\
    &\quad = \int_0^\tau\int_\Omega\left(\rho\vu\cdot\nabla_y P'(\vrt) + \rho\nabla_y P'(\vrt)\cdot(\vut-\vu)\right)\dydt = \int_0^\tau\int_\Omega \rho\vut\cdot\nabla_y P'(\vrt) \dydt.
\end{split}
\end{equation}
Finally, we use the continuity equation for the strong solution $(\vrt,\vut)$ together with the fact that
\begin{equation*}
    p'(\rho) = P''(\rho)\rho,
\end{equation*}
which follows from \eqref{eq:pPrel}, to further rewrite the last line of \eqref{eq:REI_step1} to
\begin{equation}
\begin{split}
    &\int_0^\tau\int_\Omega\left( p(\rho)\Div\vut + (\rho-\vrt)\partial_t P'(\vrt) + \rho\vut\cdot\nabla_y P'(\vrt)\right) \dydt \\
    &\quad = \int_0^\tau\int_\Omega\left(p(\rho)\Div\vut +  (\rho-\vrt)P''(\vrt)\partial_t \vrt + \rho P''(\vrt)\vut\cdot\nabla_y\vrt \right) \dydt \\
    &\quad = \int_0^\tau\int_\Omega\left(p(\rho)\Div\vut -  (\rho-\vrt)P''(\vrt) \vrt \Div\vut + \vrt P''(\vrt)\vut\cdot\nabla_y\vrt \right) \dydt \\
    &\quad = \int_0^\tau\int_\Omega\left(p(\rho)\Div\vut -  (\rho-\vrt)p'(\vrt) \Div\vut + \vut\cdot\nabla_y p(\vrt) \right) \dydt \\
    &\quad = \int_0^\tau\int_\Omega \Div\vut \left(p(\rho) - p(\vrt) - p'(\vrt)(\rho-\vrt) \right) \dydt,
\end{split}
\end{equation}
where we used also integration by parts on the last line. Therefore we obtain
\begin{equation}\label{eq:REI_step2}
\begin{split}
   &\int_{\Omega}\left(\frac 12 \rho |\vu-\vut|^2 + \mathcal{E}^{\vrt}(\rho)\right)(\tau,y) \dy + \int_\Omega \d\mathfrak{E}(\tau) \\
   &\quad + \int^\tau_0\int_\Omega\left(\mu|\nabla_x\mathbf u-\nabla_x\vut|^2+\delta|\partial_z\mathbf u - \partial_z\vut|^2+(\mu+\lambda)|\Div\mathbf u-\Div\vut|^2\right)\dydt \\
   &\quad \leq - \int_0^\tau \int_\Omega \nabla_y \vut:\d \mathfrak{R}(t) \dt -\int_0^\tau\int_\Omega \rho(\vu-\vut)\otimes(\vu-\vut):\nabla_y\vut \dydt \\
   &\quad + \int_0^\tau\int_\Omega \frac{\rho-\vrt}{\vrt}\left( \mu\Delta_x\vut + \delta\partial_{zz}\vut + (\mu+\lambda)\nabla_y\Div\vut \right)\cdot(\vut-\vu) \dydt \\
   &\quad - \int_0^\tau\int_\Omega \Div\vut \left(p(\rho) - p(\vrt) - p'(\vrt)(\rho-\vrt) \right) \dydt. 
\end{split}
\end{equation}
We are now ready to estimate the four integrals on the right hand side of \eqref{eq:REI_step2} by means of terms on the left hand side. In order to shorten notation, let us denote
\begin{equation}
    \mathcal{E}_{rel}(\tau) := \int_{\Omega}\left(\frac 12 \rho |\vu-\vut|^2 + \mathcal{E}^{\vrt}(\rho)\right)(\tau,y) \dy + \int_\Omega \d\mathfrak{E}(\tau).
\end{equation}
Then, using \eqref{2.5} and the fact that $\mathfrak{R}$ is a positive semi-definite matrix, we get
\begin{equation}\label{eq:RHS1}
    \left|\int_0^\tau \int_\Omega \nabla_y \vut:\d \mathfrak{R}(t) \dt\right| \leq C\int_0^\tau\|\nabla_y\vut\|_{L^\infty(\Omega)} \mathcal{E}_{rel}(t)\dt.
\end{equation}
Similarly, we have
\begin{equation}\label{eq:RHS2}
    \left|\int_0^\tau\int_\Omega \rho(\vu-\vut)\otimes(\vu-\vut):\nabla_y\vut \dydt\right| \leq C\int_0^\tau\|\nabla_y\vut\|_{L^\infty(\Omega)} \mathcal{E}_{rel}(t)\dt
\end{equation}
and since it is easy to observe that
\begin{equation*}
    p(\rho) - p(\vrt) - p'(\vrt)(\rho-\vrt) = (\gamma-1)\mathcal{E}^{\vrt}(\rho),
\end{equation*}
we also get
\begin{equation}\label{eq:RHS3}
    \left|\int_0^\tau\int_\Omega \Div\vut \left(p(\rho) - p(\vrt) - p'(\vrt)(\rho-\vrt) \right) \dydt\right| \leq C\int_0^\tau \|\nabla_y\vut\|_{L^\infty(\Omega)}\mathcal{E}_{rel}(t)\dt. 
\end{equation}
It remains to handle the third integral on the right hand side of \eqref{eq:REI_step2}, where again in order to shorten the notation we denote
\begin{equation*}
    \mathbf{d}(\nabla_y^2\vut) :=  \mu\Delta_x\vut + \delta\partial_{zz}\vut + (\mu+\lambda)\nabla_y\Div\vut.
\end{equation*}
We split the integral into three parts
\begin{equation*}
\begin{split}
    &\int_0^\tau\int_\Omega \frac{\rho-\vrt}{\vrt}\mathbf{d}(\nabla_y^2\vut)\cdot(\vut-\vu) \dydt = \int_{\rho < \frac{\vrt}{2}} \frac{\rho-\vrt}{\vrt}\mathbf{d}(\nabla_y^2\vut)\cdot(\vut-\vu) \dydt \\
    &\quad + \int_{\frac{\vrt}{2}\leq\rho\leq 2\vrt} \frac{\rho-\vrt}{\vrt}\mathbf{d}(\nabla_y^2\vut)\cdot(\vut-\vu) \dydt + \int_{\rho > 2\vrt} \frac{\rho-\vrt}{\vrt}\mathbf{d}(\nabla_y^2\vut)\cdot(\vut-\vu) \dydt = \sum_{i=1}^3 I_i.
\end{split}
\end{equation*}
Before continuing, we recall \eqref{eq:Pass}, which yields the following estimates
\begin{align}
    1 \leq C(\vrt) \mathcal{E}^{\vrt}(\rho) \quad &\text{ for } \rho < \frac{\vrt}{2} \label{6.12}\\
    (\rho-\vrt)^2 \leq C(\vrt) \mathcal{E}^{\vrt}(\rho) \quad &\text{ for } \frac{\vrt}{2} \leq \rho \leq 2\vrt \label{6.13}\\
    \rho^\gamma \leq C(\vrt) \mathcal{E}^{\vrt}(\rho) \quad &\text{ for } \rho > 2\vrt, \label{6.14}
\end{align}
where $C(\vrt)$ is a constant depending on $\inf \vrt$ and $\sup \vrt$. Then we have
\begin{equation}\label{eq:RHS41}
\begin{split}
    &|I_1| \leq C\int_0^\tau \|\vut-\vu\|_{L^6(\Omega)}\left\|\frac{\mathbf{d}(\nabla_y^2\vut)}{\vrt}\right\|_{L^3(\Omega)}\|1\|_{L^2(\{y:\rho(t,y)<\frac{\vrt(t,y)}{2}\})} \dt \\
    &\quad \leq \delta\int_0^\tau\|\vut-\vu\|_{L^6(\Omega)}^2\dt + C(\delta)\int_0^\tau\|\nabla_y^2\vut\|_{L^3(\Omega)}^2\mathcal{E}_{rel}(t)\dt \\
    &\quad \leq \delta_0\int^\tau_0\int_\Omega\left(\mu|\nabla_x\mathbf u-\nabla_x\vut|^2+\delta|\partial_z\mathbf u - \partial_z\vut|^2+(\mu+\lambda)|\Div\mathbf u-\Div\vut|^2\right)\dydt \\
    &\quad + C(\delta)\int_0^\tau\|\nabla_y^2\vut\|_{L^3(\Omega)}^2\mathcal{E}_{rel}(t)\dt,
\end{split}
\end{equation}
where we have used \eqref{6.12}, the Sobolev inequality, Corollary \ref{co:beta} and the fact that $\vrt$ is bounded from below by a positive constant. The first term can now be moved to the left hand side of \eqref{eq:REI_step2} by choosing $\delta$ sufficiently small.

With the second integral we proceed similarly.
\begin{equation}\label{eq:RHS42}
\begin{split}
    &|I_2| \leq C\int_0^\tau \|\vut-\vu\|_{L^6(\Omega)}\left\|\frac{\mathbf{d}(\nabla_y^2\vut)}{\vrt}\right\|_{L^3(\Omega)}\|\rho-\vrt\|_{L^2(\{y:\frac{\vrt(t,y)}{2}\leq\rho(t,y)\leq2\vrt(t,y)\})} \dt \\
    &\quad \leq \delta\int_0^\tau\|\vut-\vu\|_{L^6(\Omega)}^2\dt + C(\delta)\int_0^\tau\|\nabla_y^2\vut\|_{L^3(\Omega)}^2\mathcal{E}_{rel}(t)\dt \\
    &\quad \leq \delta_0\int^\tau_0\int_\Omega\left(\mu|\nabla_x\mathbf u-\nabla_x\vut|^2+\delta|\partial_z\mathbf u - \partial_z\vut|^2+(\mu+\lambda)|\Div\mathbf u-\Div\vut|^2\right)\dydt \\
    &\quad + C(\delta)\int_0^\tau\|\nabla_y^2\vut\|_{L^3(\Omega)}^2\mathcal{E}_{rel}(t)\dt,
\end{split}
\end{equation}
where we have used the same tools as above with \eqref{6.13} instead of \eqref{6.12}.

Finally, 
we estimate the integral $I_3$ as follows.
\begin{equation}\label{eq:RHS43}
\begin{split}
    &|I_3| \leq \int_0^\tau\int_\Omega \sqrt{\rho}|\vu-\vut|\sqrt{\rho} \left|\frac{\mathbf{d}(\nabla_y^2\vut)}{\vrt}\right| 1_{\rho > 2\vrt} \dydt \\
    &\quad \leq C\int_0^\tau \|\rho(\vu-\vut)^2\|_{L^1(\Omega)}^{\frac 12}\|\rho 1_{\rho > 2\vrt}\|_{L^1(\Omega)}^{\frac 12} \|\nabla^2_y\vut\|_{L^\infty(\Omega)} \dt \\
    &\quad \leq C \int_0^\tau \mathcal{E}_{rel}(t) \|\nabla^2_y\vut\|_{L^\infty(\Omega)} \dt,
\end{split}
\end{equation}
where we have used also $\rho < C\rho^\gamma$ on the set $\{\rho > 2\vrt\}$ and \eqref{6.14}.

Summing up \eqref{eq:RHS1}, \eqref{eq:RHS2}, \eqref{eq:RHS3}, \eqref{eq:RHS41}, \eqref{eq:RHS42} and \eqref{eq:RHS43} we obtain from \eqref{eq:REI_step2}
\begin{equation}\label{eq:REI_step3}
\begin{split}
   &\mathcal{E}_{rel}(\tau) + \int^\tau_0\int_\Omega\left(\mu|\nabla_x(\mathbf u-\vut)|^2+\delta|\partial_z(\mathbf u -\vut)|^2+(\mu+\lambda)|\Div(\mathbf u-\vut)|^2\right)\dydt \\
   &\quad \leq C\int_0^\tau h(t)\mathcal{E}_{rel}(t)\dt,
\end{split}
\end{equation}
where 
$$
h(t) = \|\nabla_y\vut\|_{L^\infty(\Omega)} + \|\nabla_y^2\vut\|_{L^3(\Omega)}^2 + \|\nabla_y^2\vut\|_{L^\infty(\Omega)} \in L^1(0,T),
$$
due to the assumptions \eqref{eq:WSU_reg}. This however directly implies $\mathcal{E}_{rel} \equiv 0$ in $(0,T)$ using the Gronwall lemma. This in turn yields $\mathfrak{E} \equiv 0$ and also $\mathfrak{R} \equiv 0$ through \eqref{2.5}, $\rho = \vrt$ and finally $\vu = \vut$ in $(0,T)\times \Omega$. This completes the proof of Theorem \ref{t:WSU}.

\vskip 0.5cm
\noindent {\bf Acknowledgements}

\vskip 0.1cm

The research of  O. K. and \v{S}. N. has been supported by the Czech Science Foundation (GA\v CR) project 22-01591S.  Moreover,  O. K. and \v S. N.  has been supported by  Praemium Academiae of \v S. Ne\v casov\' a. The Institute of Mathematics, CAS is supported by RVO:67985840. T. T. is partially supported by NSFC No. 12371246 and Qing Lan Project of Jiangsu Province.


\end{document}